\newtheorem{theorem}{Theorem}[section]
\newtheorem{lemma}[theorem]{Lemma}
\newtheorem{proposition}[theorem]{Proposition}
\newtheorem{corollary}[theorem]{Corollary}
\theoremstyle{definition}
\newtheorem{definition}[theorem]{Definition}
\newtheorem{example}[theorem]{Example}
\newtheorem{remark}[theorem]{Remark}
\numberwithin{equation}{section}
\def\Cmath{\mathbb{C}}
\def\Nmath{\mathbb{N}}
\def\Zmath{\mathbb{Z}}
\renewcommand{\blacksquare}{\sharp}
\begin{document}
\title{On  $\mathcal{I}$-null Lie algebras}
\author{L. Magnin}
\address{Institut de Math\'{e}matiques de Bourgogne,
UMR CNRS 5584, Universit\'{e} de Bourgogne, BP 47870, 21078 Dijon Cedex, France.}
\email{magnin@u-bourgogne.fr}

\subjclass[2010]{17B30}

\keywords{}
\date{}

\begin{abstract}
We  consider the class of complex Lie algebras for which the Koszul 3-form is zero,
and prove that it contains
all quotients of Borel subalgebras,
or of their nilradicals,
of finite dimensional complex semisimple Lie algebras.
A list of Kac-Moody types for indecomposable nilpotent complex Lie algebras of
dimension $\leq 7$ is given.
\end{abstract}

\maketitle
\section{Introduction}

Leibniz algebras
are non-antisymmetric versions
$\mathfrak{g}$
of Lie algebras: the commutator is not required to be antisymmetric,
and the right adjoint operations $[.,Z]$
are required to be derivations
for any $Z \in \mathfrak{g}$
(\cite{loday}).
In the presence of antisymmetry, that is equivalent
to the Jacobi identity.
Leibniz algebras have a cohomology of their own,
the Leibniz cohomology
$HL^\bullet(\mathfrak{g},\mathfrak{g}),$
associated to the complex
$CL^\bullet(\mathfrak{g},\mathfrak{g}) =  \text{Hom }
\left(\mathfrak{g}^{\otimes \bullet}, \mathfrak{g} \right)
= \mathfrak{g} \otimes \left(\mathfrak{g}^*\right)^{\otimes \bullet}$
and the Leibniz coboundary $\delta$ defined for $\psi \in
CL^n(\mathfrak{g},\mathfrak{g})$ by
\begin{multline*}
(\delta \psi)(X_1,X_2, \cdots , X_{n+1})
= \\  [X_1,\psi(X_2,\cdots, X_{n+1})]
 +
 \sum_{i=2}^{n+1} \,(-1)^i [\psi(X_1, \cdots, \hat{X_i}, \cdots, X_{n+1}), X_i]
 \\
 +
 \sum_{1\leqslant i < j \leqslant n+1} (-1)^{j+1}  \,
 \psi(X_1, \cdots, X_{i-1},[X_i,X_j],X_{i+1},\cdots, \hat{X_j}, \cdots, X_{n+1})
\end{multline*}
(If $\mathfrak{g}$ is a Lie algebra, $\delta$ coincides with the usual coboundary $d$
on
$C^\bullet(\mathfrak{g},\mathfrak{g}) = \mathfrak{g} \otimes \bigwedge^{\bullet} \, \mathfrak{g}^*$ ).
Since Lie algebras are Leibniz algebras,
a natural question is,
given some fixed Lie algebra,
whether
or not it has more
infinitesimal Leibniz deformations
(i.e. deformations as a Leibniz algebra)
than infinitesimal deformations as a Lie algebra.
That amounts to the comparison of
the adjoint Leibniz
 2-cohomology group
$HL^2(\mathfrak{g},\mathfrak{g})$
and the ordinary one $H^2(\mathfrak{g},\mathfrak{g}),$
and was addressed by elementary methods
in \cite{jointpaper}. There we proved that
$$HL^2(\mathfrak{g},\mathfrak{g}) = H^2(\mathfrak{g},\mathfrak{g}) \oplus
ZL^2_0(\mathfrak{g},\mathfrak{g})  \oplus  \mathcal{C},$$
where
$ZL^2_0(\mathfrak{g},\mathfrak{g})$
is the space of
symmetric  Leibniz 2-cocycles
and
$\mathcal{C}$
is a space
consisting  of \textit{coupled} Leibniz 2-cocycles, i.e. the nonzero elements
have the property that their
symmetric and antisymmetric
parts are not cocycles.
The Lie algebra $\mathfrak{g}$ is said to be (adjoint)
$ZL^2$-uncoupling
if
$\mathcal{C} =\{0\}$.
That is best understood in terms of the
Koszul map   $\mathcal{I}$
which associates to any invariant bilinear form $B$
on the Lie algebra
$\mathfrak{g}$
the Koszul form $(X,Y,Z) \mapsto I_B(X,Y,Z)=B([X,Y],Z)$
($X,Y,Z  \in \mathfrak{g}$).
Then
$ZL^2_0(\mathfrak{g},\mathfrak{g})
=\mathfrak{c} \otimes \ker{\mathcal{I}}
$
($\mathfrak{c}$ the center of
$\mathfrak{g}$)
and
$\mathcal{C} \cong
\left(\mathfrak{c} \otimes \text{Im\,}{\mathcal{I}}\right) \cap B^3(\mathfrak{g},\mathfrak{g}).$
Hence
$\mathfrak{g}$
is
$ZL^2$-uncoupling
if and only if
$\left(\mathfrak{c}\, \otimes \text{Im\,}{\mathcal{I}}\right) \cap
B^3(\mathfrak{g},\mathfrak{g}) =\{0\}. $
The class of (adjoint)
$ZL^2$-uncoupling  Lie algebras is rather extensive since it contains,
beside the class of zero center Lie algebras,
the class
of Lie algebras having zero Koszul form, which we call
 $\mathcal{I}$-null Lie algebras.

In the present paper, we examine some properties of
the class of
 $\mathcal{I}$-null Lie algebras.
First,  after proving basic properties
 of $\mathcal{I}$-null Lie algebras, we state in Proposition
\ref{prop1}
 a result for
Lie  algebras having a codimension 1 ideal, connecting
 $\mathcal{I}$-nullity of the ideal  and
 $\mathcal{I}$-nullity or  $\mathcal{I}$-exactedness (i.e. the Koszul form is a coboundary)  of the
 Lie algebra itself.
 Several corollaries are given, and fundamental examples are treated in detail.
 We also give a table (Table 1) for all non
 $\mathcal{I}$-null complex Lie algebras of dimension  $\leqslant 7$. This table is a new result.
Then comes the main result of the paper, Theorem
\ref{corollarynilradical},
which states that
any  nilradical
of a Borel subalgebra
of a finite-dimensional  semi-simple
Lie algebra is
$\mathcal{I}$-null.

We also give a list of Kac-Moody types for indecomposable nilpotent Lie algebras of
dimension $\leq 7$ (Table 2).
Again, that result is new.
\par
Throughout the paper, the base field is $\Cmath.$
\section{The Koszul map and $\mathcal{I}$-null  Lie algebras}

Let $\mathfrak{g}$ be any finite dimensional complex Lie algebra.
Recall that a  symmetric bilinear form
$B \in  S^2 \mathfrak{g}^*$
is said to be invariant (see \cite{koszul}), i.e.
$B \in \left(S^2 \mathfrak{g}^*\right) ^{\mathfrak{g}}$
if and only if
$B([Z,X],Y) =-B(X,[Z,Y])  \; \forall X,Y,Z \in
\mathfrak{g}.$
The Koszul map
$\mathcal{I} \, : \,
\left(S^2 \mathfrak{g}^*\right) ^{\mathfrak{g}}
\rightarrow
\left(\bigwedge^3 \mathfrak{g}^*\right)^{\mathfrak{g}}
\subset Z^3(\mathfrak{g},\Cmath)
$
is defined by $\mathcal{I}(B)=  I_B,$ with
$I_B(X,Y,Z)=  B([X,Y],Z) \; \forall X,Y,Z \in  \mathfrak{g}.$
\begin{lemma}
\label{lemmep}
Denote $\mathcal{C}^2 \mathfrak{g}=[\frak{g},\frak{g}].$
The projection
$\pi \; : \; \mathfrak{g} \rightarrow \mathfrak{g}/\mathcal{C}^2 \mathfrak{g}$
induces an isomorphism
$$\varpi \; : \; \ker{\mathcal{I}} \rightarrow
S^2  \left( \mathfrak{g}/\mathcal{C}^2 \mathfrak{g} \right)^*
.$$
\end{lemma}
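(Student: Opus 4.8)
The plan is to identify $\ker\mathcal{I}$ explicitly and then recognize it as the pullback along $\pi$ of the full space of symmetric forms on the abelianization. First I would unwind the kernel condition: for $B \in \left(S^2\mathfrak{g}^*\right)^{\mathfrak{g}}$, the equality $\mathcal{I}(B) = I_B = 0$ means $B([X,Y],Z) = 0$ for all $X,Y,Z \in \mathfrak{g}$, which is precisely the statement that $\mathcal{C}^2\mathfrak{g}$ lies in the radical of $B$, i.e. $B(\mathcal{C}^2\mathfrak{g},\mathfrak{g}) = 0$. Thus
\[
\ker\mathcal{I} = \left\{\, B \in \left(S^2\mathfrak{g}^*\right)^{\mathfrak{g}} \; : \; B(\mathcal{C}^2\mathfrak{g},\mathfrak{g}) = 0 \,\right\}.
\]

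Next I would construct $\varpi$ by descent. If $B \in \ker\mathcal{I}$, then by the displayed condition together with the symmetry of $B$ we have $B(X,Y) = 0$ whenever $X$ or $Y$ lies in $\mathcal{C}^2\mathfrak{g}$; hence $B(X,Y)$ depends only on the classes $\pi(X), \pi(Y)$, and there is a well-defined symmetric form $\bar B \in S^2\left( \mathfrak{g}/\mathcal{C}^2\mathfrak{g} \right)^*$ with $\bar B(\pi(X),\pi(Y)) = B(X,Y)$. Setting $\varpi(B) = \bar B$ gives a linear map, and the defining relation $\bar B(\pi(X),\pi(Y)) = B(X,Y)$ shows at once that $\bar B = 0$ implies $B = 0$, so $\varpi$ is injective.

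For surjectivity I would go the other way: given any $\bar B \in S^2\left( \mathfrak{g}/\mathcal{C}^2\mathfrak{g} \right)^*$, set $B = \pi^*\bar B$, that is $B(X,Y) = \bar B(\pi(X),\pi(Y))$. This $B$ is symmetric and lies in $\ker\mathcal{I}$, because $I_B(X,Y,Z) = \bar B(\pi([X,Y]),\pi(Z)) = 0$, as $[X,Y] \in \mathcal{C}^2\mathfrak{g}$ gives $\pi([X,Y]) = 0$. The same observation shows $\varpi(B) = \bar B$, completing surjectivity.

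The one point requiring a word of care — and the only place where the invariance hypothesis interacts with the argument — is checking that the pulled-back form $B$ genuinely belongs to the domain $\left(S^2\mathfrak{g}^*\right)^{\mathfrak{g}}$ of $\mathcal{I}$. But this is automatic: in $B([Z,X],Y) + B(X,[Z,Y])$ both brackets $[Z,X]$ and $[Z,Y]$ lie in $\mathcal{C}^2\mathfrak{g}$, so each term vanishes and the invariance identity $B([Z,X],Y) = -B(X,[Z,Y])$ holds trivially. I do not expect any real obstacle: the lemma is essentially a reformulation of the kernel condition, and its whole content is that the symmetric invariant forms killed by $\mathcal{I}$ are exactly those factoring through the abelianization $\mathfrak{g}/\mathcal{C}^2\mathfrak{g}$.
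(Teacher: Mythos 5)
Your proof is correct and follows essentially the same route as the paper: descend $B$ to the abelianization using the vanishing of $I_B$ (equivalently, $B(\mathcal{C}^2\mathfrak{g},\mathfrak{g})=0$) for well-definedness and injectivity, then pull back an arbitrary $\bar B$ along $\pi$ for surjectivity, noting that invariance and the kernel condition hold trivially for the pullback. The only cosmetic difference is that you isolate the characterization of $\ker\mathcal{I}$ as a first step, which the paper performs inline.
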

\begin{proof}
For  $B \in \ker{\mathcal{I}},$
define
$\varpi (B) \in S^2 \left(\mathfrak{g}/\mathcal{C}^2 \mathfrak{g}\right)^*$
by $$\varpi(B)(\pi(X),\pi(Y))= B(X,Y),\; \forall X,Y \in \frak{g}.$$
$\varpi(B)$ is well-defined since for $X,Y,U,V \in \frak{g}$
\begin{eqnarray*}
B(X+[U,V],Y)&=& B(X,Y)+B([U,V],Y)\\
&=& B(X,Y)+I_B(U,V,Y)\\
&=& B(X,Y) \text{ (as $I_B=0$)}.
\end{eqnarray*}
The map $\varpi$ is injective since $\varpi(B)=0$ implies $B(X,Y)=0 \; \forall X,Y \in \frak{g}.$
To prove that it is onto, let
$\bar{B} \in S^2  \left( \mathfrak{g}/\mathcal{C}^2 \mathfrak{g} \right)^*,$ and
let
$B_\pi \in S^2 \mathfrak{g}^*$
defined by $B_\pi(X,Y)=\bar{B}(\pi(X),\pi(Y)).$
Then
$B_\pi ([X,Y],Z) = \bar{B}(\pi([X,Y]),\pi(Z))= \bar{B}(0,\pi(Z)) =0
\linebreak[4]
\; \forall X,Y,Z \in
\frak{g}, $
hence
$B_\pi \in \left(S^2 \mathfrak{g}^*\right) ^{\mathfrak{g}}$
and  $B_\pi \in \ker{\mathcal{I}}.$
Now, $\varpi(B_\pi)=\bar{B}.$
\end{proof}

From Lemma
\ref{lemmep},
$\dim
\left(S^2 \mathfrak{g}^*\right) ^{\mathfrak{g}}
= \frac{\ell(\ell+1)}{2}
+\dim \text{Im\,}{\mathcal{I}},$
where $\ell = \dim   H^1(\mathfrak{g},\Cmath)
  =\dim{\left(   \mathfrak{g}/\mathcal{C}^2 \mathfrak{g} \right)}.$
For reductive
$\mathfrak{g},$
$\dim
\left(S^2 \mathfrak{g}^*\right) ^{\mathfrak{g}}
= \dim H^3(\mathfrak{g},\Cmath)$
(\cite{koszul}).

\begin{definition}
$\mathfrak{g}$ is said to be
 $\mathcal{I}$-null
 (resp.
 $\mathcal{I}$-exact)
if
$\mathcal{I}=0$
 (resp.
$\text{Im\,}{\mathcal{I}} \subset B^3(\frak{g},\Cmath))$.
\end{definition}

$\mathfrak{g}$ is $\mathcal{I}$-null if and only
$\mathcal{C}^2 \mathfrak{g} \subset \ker{B}$
$\forall B \in   \left(S^2 \mathfrak{g}^*\right)^{\mathfrak{g}}.$
It is standard that for any
$B \in  \left(S^2 \mathfrak{g}^*\right)^{\mathfrak{g}},$ there exists
$B_1 \in  \left(S^2 \mathfrak{g}^*\right)^{\mathfrak{g}}$
such that
$\ker{(B+B_1)} \subset \mathcal{C}^2 \mathfrak{g}.$ Hence
$\bigcap_{B \in
\left(S^2 \mathfrak{g}^*\right)^{\mathfrak{g}}}\, \ker{B} \subset \mathcal{C}^2 \mathfrak{g},$
and
$\mathfrak{g}$ is $\mathcal{I}$-null if and only
$\bigcap_{B \in
\left(S^2 \mathfrak{g}^*\right)^{\mathfrak{g}}}\, \ker{B} = \mathcal{C}^2 \mathfrak{g}.$

\begin{lemma}
(i) Any quotient of a  (not necessarily finite dimensional)
$\mathcal{I}$-null Lie algebra is
$\mathcal{I}$-null;
\\ (ii) Any finite direct product of
$\mathcal{I}$-null Lie algebras is
$\mathcal{I}$-null.
\end{lemma}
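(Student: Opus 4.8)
The plan is to work from the characterization recorded just before the statement: a Lie algebra $\mathfrak{h}$ is $\mathcal{I}$-null exactly when $I_B(X,Y,Z) = B([X,Y],Z) = 0$ for every invariant $B \in \left(S^2 \mathfrak{h}^*\right)^{\mathfrak{h}}$ and all $X,Y,Z \in \mathfrak{h}$, equivalently $\mathcal{C}^2 \mathfrak{h} \subset \ker B$ for all such $B$. In both parts the key mechanism is that invariance of a symmetric bilinear form is preserved under the natural operations involved (pullback along a surjective homomorphism, and restriction to a factor), so the hypothesis on the building blocks can be transported to the form living on the larger or smaller algebra and then exploited. I expect (i) to be routine and (ii) to require one extra idea.

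For (i), let $p : \mathfrak{g} \to \mathfrak{q} = \mathfrak{g}/\mathfrak{a}$ be the canonical projection onto a quotient of an $\mathcal{I}$-null algebra $\mathfrak{g}$, and let $\bar{B} \in \left(S^2 \mathfrak{q}^*\right)^{\mathfrak{q}}$ be arbitrary. I would pull $\bar{B}$ back to $B \in S^2 \mathfrak{g}^*$ by setting $B(X,Y) = \bar{B}(p(X),p(Y))$. Since $p$ is a Lie algebra homomorphism, a one-line computation using invariance of $\bar{B}$ gives $B([Z,X],Y) = \bar{B}([p(Z),p(X)],p(Y)) = -\bar{B}(p(X),[p(Z),p(Y)]) = -B(X,[Z,Y])$, so $B$ is invariant on $\mathfrak{g}$. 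By $\mathcal{I}$-nullity of $\mathfrak{g}$ we get $B([X,Y],Z) = 0$ for all $X,Y,Z \in \mathfrak{g}$. Finally, given $\bar{X},\bar{Y},\bar{Z} \in \mathfrak{q}$, I lift them through the surjection $p$ to $X,Y,Z \in \mathfrak{g}$ and compute $\bar{B}([\bar{X},\bar{Y}],\bar{Z}) = \bar{B}(p([X,Y]),p(Z)) = B([X,Y],Z) = 0$, whence $I_{\bar{B}} = 0$. This argument uses only surjectivity of $p$ and never finite-dimensionality, which is why the statement allows infinite dimension.

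For (ii), write $\mathfrak{g} = \mathfrak{g}_1 \times \cdots \times \mathfrak{g}_n$ with each $\mathfrak{g}_k$ being $\mathcal{I}$-null, and identify each $\mathfrak{g}_k$ with its image in $\mathfrak{g}$, so that distinct factors commute and $\mathcal{C}^2 \mathfrak{g} = \bigoplus_k \mathcal{C}^2 \mathfrak{g}_k$. Fix an invariant $B \in \left(S^2 \mathfrak{g}^*\right)^{\mathfrak{g}}$. By bilinearity it suffices to prove $B([X_i,Y_i],Z_j) = 0$ for $X_i,Y_i \in \mathfrak{g}_i$, $Z_j \in \mathfrak{g}_j$, and every pair $(i,j)$. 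The diagonal case $i = j$ is handled by noting that the restriction $B_i := B|_{\mathfrak{g}_i \times \mathfrak{g}_i}$ is invariant on $\mathfrak{g}_i$ (immediate from invariance of $B$ together with the closure of $\mathfrak{g}_i$ under the bracket), so $\mathcal{I}$-nullity of $\mathfrak{g}_i$ yields $B_i([X_i,Y_i],Z_i) = I_{B_i}(X_i,Y_i,Z_i) = 0$.

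The main obstacle is the off-diagonal case $i \neq j$: the quantity $B([X_i,Y_i],Z_j)$ is a genuine cross term of $B$ that is not controlled by any single factor form $B_k$, and this is precisely where a product form can fail to split as a sum of forms on the factors. I would dispose of it using invariance together with commutativity of distinct factors: applying the invariance identity with the bracket written as $[X_i,Y_i]$ gives $B([X_i,Y_i],Z_j) = -B(Y_i,[X_i,Z_j])$, and $[X_i,Z_j] = 0$ for $i \neq j$, so the term vanishes. Summing the finitely many vanishing contributions over all $(i,j)$ shows $B([X,Y],Z) = 0$ for all $X,Y,Z \in \mathfrak{g}$, and hence $\mathfrak{g}$ is $\mathcal{I}$-null.
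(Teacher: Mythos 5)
Your proof is correct and follows essentially the same route as the paper: part (i) pulls the invariant form back along the projection exactly as the paper does, and part (ii) kills the cross terms via invariance plus commutation of distinct factors and the diagonal terms via restriction and $\mathcal{I}$-nullity of each factor. The only cosmetic difference is that you treat $n$ factors at once and spell out the diagonal case, which the paper (working with two factors) leaves implicit.
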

\begin{proof}
(i)
Let $\frak{g}$ be any
$\mathcal{I}$-null Lie algebra,
 $\frak{h}$ an ideal of
$\frak{g},$
$\bar{\frak{g}}  = \frak{g}/\frak{h},$
\linebreak[4]
$\pi \, : \,
\frak{g}  \rightarrow  \bar{\frak{g}}$ the projection,
and $\bar{B}
\in \left( S^2 {\bar{\mathfrak{g}}}^* \right) ^ {\bar{\frak{g}}}.$
Define $B_\pi
\in S^2 {{\mathfrak{g}}}^*$
by $B_\pi (X,Y) = \bar{B}(\pi(X),\pi(Y)) \, , X,Y \in
\frak{g}.$
Then
$B_\pi ([X,Y],Z) = \bar{B}(\pi([X,Y]),\pi(Z))
= \linebreak[4]  \bar{B}([\pi(X),\pi(Y)],\pi(Z))
= \bar{B}(\pi(X),[\pi(Y),\pi(Z)])
= \bar{B}(\pi(X),\pi([Y,Z]))= \linebreak[4]
B_\pi (X,[Y,Z]) \,
\forall X,Y,Z \in
\frak{g}, $
hence
$B_\pi \in \left(S^2 \mathfrak{g}^*\right) ^{\mathfrak{g}}$
and
$I_{\bar{B}} \circ (\pi \times \pi \times \pi) =  I_{B_\pi} =0$
since
$\frak{g}$
is $\mathcal{I}$-null.
Hence $I_{\bar{B}}=0.$
\\ (ii)
Let
$\frak{g}  = \frak{g}_1 \times \frak{g}_2$
($ \frak{g}_1 , \frak{g}_2$
 $\mathcal{I}$-null) and
$B\in  \left(S^2 \mathfrak{g}^*\right) ^{\mathfrak{g}}.$
As $B(X_1,[Y_2,Z_2])=B([X_1,Y_2],Z_2)=B(0,Z_2)=0\,
\forall X_1 \in  {\frak{g}}_1 ,
Y_2,Z_2 \in  {\frak{g}}_2 ,$
$B$ vanishes on ${\frak{g}}_1 \times {\mathcal{C}}^2{\frak{g}}_2 $
and on $ {\mathcal{C}^2}{\frak{g}}_1 \times  {\frak{g}}_2$ as well,
hence $I_B=0.$
\end{proof}
\begin{lemma}
\label{lemmeborel}
Let $\frak{g}$ be a finite dimensional semi-simple Lie algebra, with Cartan subalgebra
$\frak{h},$
simple root system $S,$ positive roots $\Delta_+,$
and root subspaces ${\frak{g}}^\alpha.$
Let $\frak{k}\neq \{ 0 \}$ be any  subspace of  $\frak{h},$
and $\Gamma \subset
\Delta_+$
such that
$\alpha + \beta \in  \Gamma $
for $\alpha, \beta \in \Gamma, \, \alpha + \beta \in \Delta_+.$
Consider $\frak{u} =\frak{k} \oplus \bigoplus_{\alpha \in \Gamma } {\frak{g}}^\alpha.$
\\ (i) Suppose that $\alpha_{ | \frak{k} } \neq 0 \, \forall \alpha \in \Gamma.$
Then
$\frak{u}$ is $\mathcal{I}$-null;
\\ (ii)
Suppose that $\alpha_{ | \frak{k} } = 0 \, \forall \alpha \in \Gamma \cap S,$
and
$\alpha_{ | \frak{k} } \neq 0 \, \forall \alpha \in \Gamma \setminus S.$
Then
$\frak{u}$ is $\mathcal{I}$-null.
\end{lemma}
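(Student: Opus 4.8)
The plan is to prove $\mathcal{I}$-nullity directly from the definition, i.e. to show that $I_B=0$ for every invariant symmetric form $B\in\left(S^2\mathfrak{u}^*\right)^{\mathfrak{u}}$. First I would note that $\mathfrak{u}$ really is a subalgebra: $[\mathfrak{g}^\alpha,\mathfrak{g}^\beta]\subseteq\mathfrak{g}^{\alpha+\beta}$ with $\alpha+\beta\in\Gamma$ whenever $\alpha+\beta\in\Delta_+$ (closure of $\Gamma$), while $[\mathfrak{k},\mathfrak{g}^\alpha]\subseteq\mathfrak{g}^\alpha$ and $[\mathfrak{k},\mathfrak{k}]=0$. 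The basic tool is $\mathfrak{k}$-invariance of $B$. For $H\in\mathfrak{k}$, $X_\alpha\in\mathfrak{g}^\alpha$, $X_\beta\in\mathfrak{g}^\beta$ ($\alpha,\beta\in\Gamma$), the identity $B([H,X_\alpha],X_\beta)=-B(X_\alpha,[H,X_\beta])$ reads $(\alpha+\beta)(H)\,B(X_\alpha,X_\beta)=0$, so $B(\mathfrak{g}^\alpha,\mathfrak{g}^\beta)=0$ unless $(\alpha+\beta)_{|\mathfrak{k}}=0$; similarly $\alpha(H)\,B(X_\alpha,H')=-B(X_\alpha,[H,H'])=0$, so $B(\mathfrak{g}^\alpha,\mathfrak{k})=0$ whenever $\alpha_{|\mathfrak{k}}\neq0$.

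Next I would pin down $\mathcal{C}^2\mathfrak{u}$. Every bracket lands in some $\mathfrak{g}^\gamma$ with $\gamma\in\Gamma$, arising either as $[\mathfrak{k},\mathfrak{g}^\gamma]=\mathfrak{g}^\gamma$ (possible exactly when $\gamma_{|\mathfrak{k}}\neq0$) or as $[\mathfrak{g}^\alpha,\mathfrak{g}^\beta]\subseteq\mathfrak{g}^{\alpha+\beta}$ with $\alpha+\beta$ necessarily non-simple. Hence in case (i) $\mathcal{C}^2\mathfrak{u}=\bigoplus_{\alpha\in\Gamma}\mathfrak{g}^\alpha$, while in case (ii) $\mathcal{C}^2\mathfrak{u}\subseteq\bigoplus_{\gamma\in\Gamma\setminus S}\mathfrak{g}^\gamma$: a simple root space $\mathfrak{g}^\sigma$ with $\sigma\in\Gamma\cap S$ cannot occur, since $\sigma_{|\mathfrak{k}}=0$ kills $[\mathfrak{k},\mathfrak{g}^\sigma]$ and a simple root is never a sum of two positive roots. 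The crucial point is that in both cases every $\mathfrak{g}^\gamma$ occurring in $\mathcal{C}^2\mathfrak{u}$ has $\gamma_{|\mathfrak{k}}\neq0$. Since $I_B(X,Y,Z)=B([X,Y],Z)$ with $[X,Y]\in\mathcal{C}^2\mathfrak{u}$, it suffices to prove $B(\mathfrak{g}^\gamma,\mathfrak{u})=0$ for each such $\gamma$; the component along $\mathfrak{k}$ already vanishes, so everything reduces to the pairings $B(\mathfrak{g}^\gamma,\mathfrak{g}^\delta)$ for $\delta\in\Gamma$ arbitrary.

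The only delicate pairings are those with $(\gamma+\delta)_{|\mathfrak{k}}=0$, where the crude weight computation above yields nothing. Here I would exploit that $I_B$ is totally antisymmetric, being an element of $\bigwedge^3\mathfrak{u}^*$. For $H\in\mathfrak{k}$ this gives $\gamma(H)\,B(X_\gamma,X_\delta)=I_B(H,X_\gamma,X_\delta)=I_B(X_\gamma,X_\delta,H)=B([X_\gamma,X_\delta],H)$, the middle equality being a cyclic (hence even) permutation. Now $[X_\gamma,X_\delta]\in\mathfrak{g}^{\gamma+\delta}$, and either $\gamma+\delta$ is not a root, so this bracket is $0$, or $\gamma+\delta$ is a positive (non-simple) root, whence $\gamma+\delta\in\Gamma$ by closure and $B(\mathfrak{g}^{\gamma+\delta},H)=0$ by the first paragraph; either way the right-hand side is $0$. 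Choosing $H$ with $\gamma(H)\neq0$, which is possible precisely because $\gamma_{|\mathfrak{k}}\neq0$, forces $B(X_\gamma,X_\delta)=0$. (Note this single argument also disposes of the generic case $(\gamma+\delta)_{|\mathfrak{k}}\neq0$.)

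Collecting these vanishings shows $B$ annihilates $\mathcal{C}^2\mathfrak{u}$, i.e. $I_B=0$, for every invariant $B$, proving $\mathfrak{u}$ is $\mathcal{I}$-null in both cases. I expect the main obstacle to be exactly this last step: the hypotheses on $\alpha_{|\mathfrak{k}}$ are engineered so that the naive $\mathfrak{k}$-weight argument handles all but the pairs $\gamma,\delta$ with $(\gamma+\delta)_{|\mathfrak{k}}=0$, and the whole difficulty is to dispose of those by turning the antisymmetry of the Koszul form into a transfer of the troublesome pairing $B(\mathfrak{g}^\gamma,\mathfrak{g}^\delta)$ onto the already-controlled pairing $B(\mathfrak{g}^{\gamma+\delta},\mathfrak{k})$.
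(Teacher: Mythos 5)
Your proof is correct and takes essentially the same route as the paper's: both rest on invariance of $B$ against elements of $\mathfrak{k}$ (your ``weight argument''), plus the transfer identity $\gamma(H)\,B(X_\gamma,X_\delta)=B([X_\gamma,X_\delta],H)$, whose right-hand side vanishes because $\gamma+\delta$, if it is a root at all, lies in $\Gamma$, is non-simple, hence restricts nontrivially to $\mathfrak{k}$ and so pairs to zero with $\mathfrak{k}$. The only difference is organizational: you merge cases (i) and (ii) into one argument by observing that every root space occurring in $\mathcal{C}^2\mathfrak{u}$ has nonzero restriction to $\mathfrak{k}$, where the paper handles case (ii) through three separately stated pairings.
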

\begin{proof}
(i)
Let $\frak{u}_+=  \bigoplus_{\alpha \in \Gamma } {\frak{g}}^\alpha,$
and $X_\alpha$ a root vector in
${\frak{g}}^\alpha$ :
${\frak{g}}^\alpha = \Cmath X_\alpha$ $\forall \alpha \in \Gamma.$
Let  $B\in  \left(S^2 \mathfrak{u}^*\right) ^{\mathfrak{u}}.$
First,
$B(H,X)=0\,  \forall H \in \frak{k},  X
\in \frak{u}_+.$
In fact, for any $\alpha \in \Gamma, $  since
there exists $H_\alpha \in \frak{k} $ such that $\alpha(H_ \alpha) \neq 0,$
$B(H, X_\alpha)=\frac{1}{\alpha(H_\alpha)} \,
B(H, [H_\alpha, X_\alpha])
= \frac{1}{\alpha(H_\alpha)} \,
B([H, H_\alpha], X_\alpha])
= \frac{1}{\alpha(H_\alpha)} \,
B(0, X_\alpha)=0.$
Second, that entails that the restriction of $B$ to
$\frak{u}_+ \times  \frak{u}_+$
is zero, since for any
$\alpha, \beta  \in \Gamma,$
$$B(X_\alpha, X_\beta)= \frac{1}{\alpha(H_\alpha)} B([H_\alpha,X_\alpha], X_\beta)
= \frac{1}{\alpha(H_\alpha)} B(H_\alpha,[X_\alpha, X_\beta])
= 0$$
as $[X_\alpha, X_\beta] \in \frak{u}_+.$
Then
$\frak{u}$ is
$\mathcal{I}$-null.
\\ (ii)
In that case,
$X_\alpha \not \in {\mathcal{C}}^2 \frak{u} \; \forall \alpha \in \Gamma \cap S,$
and $\dim{\left(\frak{u}\left/{\mathcal{C}}^2 \frak{u}\right.\right) }= \dim{\frak{k}}
+\#(\Gamma \cap S).$
For $\frak{u}$ to be
$\mathcal{I}$-null, one has to prove that,
for any  $B\in  \left(S^2 \mathfrak{u}^*\right) ^{\mathfrak{u}}:$
\begin{eqnarray}
\label{1}
B(H,X_\beta)=0 \; \forall H \in \frak{k}, \, \beta \in \Gamma \setminus S ;\\
\label{2}
B(X_\alpha,X_\beta)=0 \; \forall
\alpha \in \Gamma \cap S \, , \,  \beta \in \Gamma \setminus S ;\\
\label{3}
B(X_\beta,X_\gamma)=0 \; \forall
\beta, \gamma \in \Gamma \setminus S.
\end{eqnarray}
(\ref{1}) is proved as in case (i).
To prove (\ref{2}), let $H_\beta \in \frak{k}$ such that $\beta(H_\beta) \neq 0.$
Then
\begin{multline*}
B(X_\alpha,X_\beta)= \frac{1}{\beta(H_\beta)}
B(X_\alpha,[H_\beta,X_\beta])=\frac{1}{\beta(H_\beta)}
B([X_\alpha,H_\beta],X_\beta)= \\
-\frac{1}{\beta(H_\beta)}B(\alpha(H_\beta)
X_\alpha,X_\beta)= -\frac{1}{\beta(H_\beta)}B(0,X_\beta)= 0.
\end{multline*}
As to  (\ref{3}),
\begin{multline*}
B(X_\beta,X_\gamma)=\frac{1}{\beta(H_\beta)}
B([H_\beta,X_\beta],X_\gamma)=\frac{1}{\beta(H_\beta)}
B(H_\beta,[X_\beta,X_\gamma])= 0 \;\text{ from (\ref{1}).}
\end{multline*}

\end{proof}

\begin{example}
Any Borel subalgebra is
$\mathcal{I}$-null.
\end{example}

\begin{proposition}
\label{prop1}
Let $\mathfrak{g}_2$ be a codimension 1 ideal of
the Lie algebra $\mathfrak{g},$
$(x_1, \cdots , x_N)$ a basis of
$\mathfrak{g}$ with $x_1\not \in \mathfrak{g}_2,$
$x_2, \cdots , x_N \in \mathfrak{g}_2,$
$\pi_2 $
 the corresponding projection onto $\mathfrak{g}_2,$
and $(\omega^1, \cdots ,\omega^N)$ denote the dual basis for
$\mathfrak{g}^*.$
Let $B \in   \left(S^2 \mathfrak{g}^*\right)^{\mathfrak{g}},$
and denote
$B_2 \in   \left(S^2 \mathfrak{g}_2^*\right)^{\mathfrak{g}_2}$
the restriction of $B$ to
$\mathfrak{g}_2 \times \mathfrak{g}_2.$
Then:
\\ (i)
\begin{equation}
\label{equationI_Bproposition}
I_B=
d(\omega^1 \wedge f) + I_{B_2}\circ (\pi_2\times \pi_2 \times \pi_2).
\end{equation}
where
$f = B(\cdot , x_1) \in \mathfrak{g}^*;$
\\ (ii)
Let   $\gamma \in \bigwedge^2 \mathfrak{g}_2^* \subset \bigwedge^2 \mathfrak{g}^*,$
and denote $d_{\mathfrak{g}_2}$ the coboundary operator
of ${\mathfrak{g}_2}.$  Then
\begin{equation}
\label{drho}
d\gamma =  \omega^1 \wedge \theta_{x_1} (\gamma)
 +
d_{\mathfrak{g}_2} \gamma
\circ (\pi_2 \times \pi_2 \times \pi_2)
\end{equation}
where $\theta_{x_1}$ stands for the coadjoint action of $x_1$ on the cohomology
of $\mathfrak{g};$
\\ (iii)
Suppose $I_{B_2} \in B^3(\mathfrak{g}_2,\Cmath),$ and
let  $\gamma \in \bigwedge^2 \mathfrak{g}_2^* \subset \bigwedge^2 \mathfrak{g}^*$ such that
$I_{B_2} = d_{\mathfrak{g}_2} \gamma.$
Then
$I_{B} \in B^3(\mathfrak{g},\Cmath)$
if and only if
$\omega^1 \wedge \theta_{x_1} (\gamma)
\in B^3(\mathfrak{g},\Cmath).$
In particular, the condition
\begin{equation}
\label{condoneta}
\theta_{x_1} (\gamma) =df
\end{equation}
implies $I_B=d\gamma.$
\end{proposition}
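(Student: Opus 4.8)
The plan is to deduce everything from the two identities (\ref{equationI_Bproposition}) and (\ref{drho}) already in hand, so that no further cochain computation is needed. First I would combine (i) and (ii). Evaluating (\ref{drho}) on the chosen $\gamma$ and using the hypothesis $I_{B_2}=d_{\mathfrak{g}_2}\gamma$, the pulled-back terms $I_{B_2}\circ(\pi_2\times\pi_2\times\pi_2)$ occurring in (\ref{equationI_Bproposition}) and $d_{\mathfrak{g}_2}\gamma\circ(\pi_2\times\pi_2\times\pi_2)$ occurring in (\ref{drho}) coincide, so subtracting the two relations eliminates them and leaves the master identity
\[
I_B-d\gamma=d(\omega^1\wedge f)-\omega^1\wedge\theta_{x_1}(\gamma).
\]

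The stated equivalence is then immediate. The summand $d(\omega^1\wedge f)$ is a coboundary, hence lies in $B^3(\mathfrak{g},\Cmath)$, and $d\gamma\in B^3(\mathfrak{g},\Cmath)$ as well; so the master identity exhibits $I_B$ and $\omega^1\wedge\theta_{x_1}(\gamma)$ as differing by an element of $B^3(\mathfrak{g},\Cmath)$. Therefore $I_B\in B^3(\mathfrak{g},\Cmath)$ if and only if $\omega^1\wedge\theta_{x_1}(\gamma)\in B^3(\mathfrak{g},\Cmath)$, which is the asserted criterion.

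For the final (``in particular'') claim I would specialize the master identity to the case $\theta_{x_1}(\gamma)=df$. The structural input is that $\omega^1$ is closed: since $\dim(\mathfrak{g}/\mathfrak{g}_2)=1$ the quotient is abelian, so $\mathcal{C}^2\mathfrak{g}\subseteq\mathfrak{g}_2=\ker\omega^1$ and hence $d\omega^1=0$. The graded Leibniz rule then collapses $d(\omega^1\wedge f)$ to a single term proportional to $\omega^1\wedge df$, while the substitution $\theta_{x_1}(\gamma)=df$ turns $\omega^1\wedge\theta_{x_1}(\gamma)$ into $\omega^1\wedge df$ as well; thus on the right-hand side of the master identity the two $\omega^1$-terms are multiples of one and the same $3$-form $\omega^1\wedge df$. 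Once the signs are matched so that they cancel, the identity reads $I_B=d\gamma$.

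The only delicate point — and the step I expect to be the real obstacle — is exactly this final cancellation. One has to fix once and for all the sign convention for the coboundary on $\bigwedge^\bullet\mathfrak{g}^*$, the induced sign in the Leibniz rule applied to $d(\omega^1\wedge f)$, and the sign built into the coadjoint action $\theta_{x_1}$, and then verify that under the hypothesis $\theta_{x_1}(\gamma)=df$ the contribution of $d(\omega^1\wedge f)$ \emph{annihilates} $\omega^1\wedge\theta_{x_1}(\gamma)$ rather than reinforcing it. Everything else is the purely formal combination of (\ref{equationI_Bproposition}) and (\ref{drho}).
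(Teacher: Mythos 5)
Your proposal establishes only part (iii) of the proposition. You declare the identities (\ref{equationI_Bproposition}) and (\ref{drho}) to be ``already in hand,'' but they are parts (i) and (ii) of the very statement to be proved, and they are where essentially all of the work lies. In the paper, (i) is obtained by expanding $B([X,Y],Z)$ along $X=\omega^1(X)x_1+\pi_2(X)$ (and likewise for $Y,Z$), using invariance of $B$ to kill the terms $B([x_1,\pi_2(\cdot)],x_1)$ and to rewrite the cross terms, and recognizing the remainder as $-(\omega^1\wedge df)=d(\omega^1\wedge f)$ (using $d\omega^1=0$); part (ii) is proved by the analogous expansion of $d\gamma$, where $\gamma(x_1,\cdot)=0$ identifies $d\gamma(x_1,\cdot,\cdot)$ with $\theta_{x_1}\gamma$. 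None of that computation appears in your proposal. Your passage from (i) and (ii) to the equivalence in (iii) --- subtract the two identities, note that the pulled-back terms coincide when $I_{B_2}=d_{\mathfrak{g}_2}\gamma$, and absorb the coboundaries $d\gamma$ and $d(\omega^1\wedge f)$ --- is correct, and it is exactly the paper's route (the paper says (iii) ``results immediately from (i) and (ii)''); but by itself it proves only the easy third of the statement.

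The ``delicate point'' you defer in the last clause is, moreover, a genuine obstruction rather than a bookkeeping detail. With the conventions actually used in proving (i) and (ii) --- $df(X,Y)=-f([X,Y])$, hence $d(\omega^1\wedge f)=-\omega^1\wedge df$, and $\theta_{x_1}$ the Lie derivative, $(\theta_{x_1}\gamma)(U,V)=-\gamma([x_1,U],V)-\gamma(U,[x_1,V])$, which is precisely what makes (\ref{drho}) true --- your master identity becomes $I_B-d\gamma=-\omega^1\wedge\bigl(df+\theta_{x_1}(\gamma)\bigr)$. Substituting $\theta_{x_1}(\gamma)=df$ then yields $I_B=d\gamma-2\,\omega^1\wedge df$: the two terms reinforce one another instead of cancelling. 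And $\omega^1\wedge df=0$ forces $df=0$, since $i_{x_1}df=0$ (by invariance, $df(x_1,\cdot)=-B([x_1,\cdot],x_1)=0$) gives $df=i_{x_1}(\omega^1\wedge df)$. So under the hypothesis as stated the conclusion $I_B=d\gamma$ holds only in the degenerate case $df=0$; the cancellation occurs exactly for $\theta_{x_1}(\gamma)=-df$, or equivalently if $\theta_{x_1}$ is taken with the opposite sign consistently in (ii) and (iii). A complete proof must carry out this verification and resolve the sign (either in (\ref{condoneta}) or in the convention for $\theta_{x_1}$); ``once the signs are matched so that they cancel'' is precisely the step that fails as written.
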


\begin{proof}
(i)
For $X,Y,Z \in \mathfrak{g}$ one has
\begin{multline}
\label{recIB}
B([X,Y],Z) = B([\omega^1(X)x_1+\pi_2(X), \omega^1(Y)x_1+\pi_2(Y)],\omega^1(Z)x_1+\pi_2(Z))
\\ = B\left(\omega^1(X)[x_1,\pi_2(Y)]-\omega^1(Y)
[x_1,\pi_2(X)] +[\pi_2(X),\pi_2(Y)], \right.\\
\left.
\omega^1(Z)x_1+\pi_2(Z)\right)
\\ = \omega^1(X)\omega^1(Z) B([x_1,\pi_2(Y)],x_1)
-\omega^1(Y)\omega^1(Z) B([x_1,\pi_2(X)],x_1)     \\
+\beta (X,Y,Z) +B\left([\pi_2(X),\pi_2(Y)],\pi_2(Z)\right)
\\
=\beta (X,Y,Z) +B\left([\pi_2(X),\pi_2(Y)],\pi_2(Z)\right)
\end{multline}
where
\begin{multline*}
\beta (X,Y,Z)= \omega^1(Z) B([\pi_2(X),\pi_2(Y)],x_1)
+\omega^1(X) B([x_1,\pi_2(Y)],\pi_2(Z)) \\
-\omega^1(Y) B([x_1,\pi_2(X)],\pi_2(Z))
\\ = \omega^1(Z) B([\pi_2(X),\pi_2(Y)],x_1)
+\omega^1(X) B(x_1,[\pi_2(Y),\pi_2(Z)]) \\
-\omega^1(Y) B(x_1,[\pi_2(X),\pi_2(Z)])
.
\end{multline*}
Now
\begin{eqnarray*}
df(X,Y)&=&-B([X,Y],x_1)\\ &=&
-B([\omega^1(X)x_1+\pi_2(X),\omega^1(Y)x_1+\pi_2(Y)],x_1)
\\
&=&
-B(\omega^1(X)[x_1,\pi_2(Y)]-\omega^1(Y)[x_1,\pi_2(X)]
+[\pi_2(X),\pi_2(Y)],x_1)
\\
&=&
-B([\pi_2(X),\pi_2(Y)],x_1),
\end{eqnarray*}
hence
\begin{eqnarray*}
\beta (X,Y,Z)&=&
-(\omega^1(Z) df(X,Y)
+\omega^1(X) df(Y,Z)
-\omega^1(Y) df(X,Z)) \\&=& -(\omega^1 \wedge df)(X,Y,Z).
\end{eqnarray*}
Since $d\omega^1=0,$
 (\ref{recIB}) then reads
\begin{equation}
I_B=
d(\omega^1 \wedge f) + I_{B_2}\circ (\pi_2\times \pi_2 \times \pi_2).
\end{equation}
\\(ii)
One has for any $X,Y,Z \in  \mathfrak{g}$
\begin{multline*}
d\gamma (X,Y,Z) = d\gamma (\pi_2(X),\pi_2(Y),\pi_2(Z))
+\omega^1(X)d\gamma (x_1,\pi_2(Y),\pi_2(Z))
\\
+\omega^1(Y)d\gamma (\pi_2(X),x_1,\pi_2(Z))
+\omega^1(Z)d\gamma (\pi_2(X),\pi_2(Y),x_1).
\end{multline*}
Now, since $\gamma$ vanishes if one of its arguments is $x_1,$
\begin{eqnarray*}
d\gamma (x_1,\pi_2(Y),\pi_2(Z))
&=&-\gamma ([x_1,\pi_2(Y)],\pi_2(Z)) +\gamma ([x_1,\pi_2(Z)],\pi_2(Y))\\
d\gamma (\pi_2(X),x_1,\pi_2(Z))
&=&-\gamma ([\pi_2(X),x_1],\pi_2(Z)) -\gamma ([x_1,\pi_2(Z)],\pi_2(X))\\
d\gamma (\pi_2(X),\pi_2(Y),x_1)
&=&\gamma ([\pi_2(X),x_1],\pi_2(Y)) -\gamma ([\pi_2(Y),x_1],\pi_2(X)),
\end{eqnarray*}
hence
\begin{multline*}
d\gamma (X,Y,Z) = d\gamma (\pi_2(X),\pi_2(Y),\pi_2(Z))
+\omega^1(X) \theta_{x_1}\gamma (\pi_2(Y),\pi_2(Z))\\
-\omega^1(Y) \theta_{x_1}\gamma (\pi_2(X),\pi_2(Z))
+\omega^1(Z) \theta_{x_1}\gamma (\pi_2(X),\pi_2(Y))
\\
=d\gamma (\pi_2(X),\pi_2(Y),\pi_2(Z))
+\left(\omega^1 \wedge  \theta_{x_1}\gamma \right)(X,Y,Z)
\end{multline*}
since
$\theta_{x_1}\gamma (\pi_2(U),\pi_2(V))
=\theta_{x_1}\gamma (U,V)$
for all $U,V \in  \mathfrak{g}.$
\\(iii) Results immediately from (i) and (ii).
\end{proof}
\par

\begin{corollary}
\label{thecorr}
Under the hypotheses of Proposition
\ref{prop1},
suppose that
$x_1$ commutes with every $x_i$ ($2 \leqslant i \leqslant N$)
except for $x_{i_1}, \cdots, x_{i_r}$  and  that
 $x_{i_1}, \cdots, x_{i_r}$  commute to one another.
Then, if
 $\frak{g}_2$ is $\mathcal{I}$-null,
 $\frak{g}$ is $\mathcal{I}$-null.
\end{corollary}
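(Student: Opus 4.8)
The plan is to invoke Proposition \ref{prop1}(i) to reduce the vanishing of $I_B$ to a single family of scalar identities, and then let the two commutation hypotheses force those scalars to be zero. Let $B \in (S^2\mathfrak{g}^*)^{\mathfrak{g}}$ be arbitrary and let $B_2$ be its restriction to $\mathfrak{g}_2\times\mathfrak{g}_2$. Since $\mathfrak{g}_2$ is $\mathcal{I}$-null we have $I_{B_2}=0$, so (\ref{equationI_Bproposition}) collapses to $I_B = d(\omega^1\wedge f) = -\,\omega^1\wedge df$, the second equality because $d\omega^1=0$ (as $\mathcal{C}^2\mathfrak{g}\subset\mathfrak{g}_2$). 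Evaluating the $3$-form $\omega^1\wedge df$ on three vectors of $\mathfrak{g}_2$ gives $0$ since $\omega^1$ vanishes there, so $I_B=0$ will follow once I show that its only remaining components vanish, i.e. that $df(Y,Z)=-B([Y,Z],x_1)=0$ for all $Y,Z\in\mathfrak{g}_2$; equivalently, by invariance, that $B([x_1,Y],Z)=0$ for all $Y,Z\in\mathfrak{g}_2$.

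First I would linearize. Writing $[x_1,Y]=\sum_j a_{i_j}[x_1,x_{i_j}]$ (because $x_1$ commutes with all basis vectors of $\mathfrak{g}_2$ other than $x_{i_1},\dots,x_{i_r}$), the identity to prove reduces by bilinearity to $B([x_1,x_{i_j}],x_b)=0$ for every $j$ and every $b\in\{2,\dots,N\}$. Using invariance in the form $B([x_1,x_{i_j}],x_b)=-B(x_{i_j},[x_1,x_b])$, the cases where $x_b$ commutes with $x_1$ are immediate, so one is left only with the cases $b=i_k$, i.e. with the scalars $B([x_1,x_{i_j}],x_{i_k})$ for $1\le j,k\le r$.

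The key step, and the only place the second hypothesis enters, is to show that these last scalars vanish. Here I would use that $x_{i_j}$ and $x_{i_k}$ commute: applying invariance to the bracket $[x_{i_j},x_{i_k}]=0$ gives $0=B([x_{i_j},x_{i_k}],x_1)=-B(x_{i_k},[x_{i_j},x_1])=B(x_{i_k},[x_1,x_{i_j}])=B([x_1,x_{i_j}],x_{i_k})$, which is exactly $B([x_1,x_{i_j}],x_{i_k})=0$. Feeding this back through the previous paragraph yields $B([x_1,Y],Z)=0$ for all $Y,Z\in\mathfrak{g}_2$, hence $\omega^1\wedge df=0$ and $I_B=0$; as $B$ was arbitrary, $\mathfrak{g}$ is $\mathcal{I}$-null.

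I expect the main obstacle to be organizational rather than computational: keeping the reduction honest so that the invariance shuffling does not become circular. The trap is that the two scalars $B([x_1,x_{i_j}],x_{i_k})$ and $B([x_1,x_{i_k}],x_{i_j})$ are related to one another by invariance only up to sign, so that relation alone proves nothing. The genuine input is the short computation above exploiting $[x_{i_j},x_{i_k}]=0$, which breaks the antisymmetry and pins the value to $0$; the hypothesis that $x_1$ commutes with the remaining basis vectors is what confines the whole problem to the pairs $(i_j,i_k)$ in the first place.
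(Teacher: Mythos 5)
Your proof is correct and takes essentially the same route as the paper: both reduce, via Proposition \ref{prop1}(i) and $I_{B_2}=0$, to showing that $df$ vanishes on $\mathfrak{g}_2\times\mathfrak{g}_2$, i.e.\ $B(x_1,[x_i,x_j])=0$ for $2\leqslant i,j\leqslant N$, and both dispose of the two cases (index outside $\{i_1,\dots,i_r\}$, resp.\ both indices inside) using invariance plus the two commutation hypotheses. The only difference is cosmetic: you phrase the target identities as $B([x_1,x_{i_j}],x_b)=0$, which is the paper's $B(x_1,[x_{i_j},x_b])=0$ after one application of invariance.
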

\begin{proof}
From Equation
\ref{equationI_Bproposition},
one has to prove that
for any invariant bilinear symmetric form $B$
on $\frak{g},$
$f = B(\cdot , x_1) \in \mathfrak{g}^*$ verifies
$df=0,$  i.e.
for any $2\leqslant i,j \leqslant N$,
$B(x_1,[x_i,x_j])=0.$
For $i \neq i_1, \cdots, i_r,$ and any $j\geqslant 2,$
$B(x_1,[x_i,x_j])=B([x_1,x_i],x_j)$
\linebreak[4]
$=B(0,x_j)=0.$
For $i,j \in \{i_1, \cdots, i_r\},$
$B(x_1,[x_i,x_j])= B(x_1,0)=0.$
\end{proof}

\begin{definition}
The $n$-dimensional standard filiform Lie algebra is the Lie algebra
with basis $\{x_1, \cdots, x_n\}$ and commutation relations
$[x_1,x_i]=x_{i+1} \; (1 \leqslant i < n).$
\end{definition}

\begin{corollary}
Any standard filiform Lie algebra or any Heisenberg Lie algebra is
$\mathcal{I}$-null.
\end{corollary}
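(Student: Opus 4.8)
The plan is to treat the two families separately, in each case exhibiting a codimension 1 ideal $\mathfrak{g}_2$ and invoking Corollary \ref{thecorr}; the filiform case is immediate, while the Heisenberg case requires one extra idea.

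For the $n$-dimensional standard filiform algebra $\mathfrak{g}$, I would take $\mathfrak{g}_2=\mathrm{span}\{x_2,\dots,x_n\}$. Since every nonzero bracket is of the form $[x_1,x_i]=x_{i+1}$, the subspace $\mathfrak{g}_2$ carries only trivial brackets, so it is abelian; its derived algebra is $\{0\}$, hence $I_{B_2}=0$ for every invariant form and $\mathfrak{g}_2$ is $\mathcal{I}$-null. Moreover $\mathfrak{g}_2$ is an ideal of codimension 1 because $[x_1,x_i]=x_{i+1}\in\mathfrak{g}_2$ for $2\leqslant i<n$ and $[x_1,x_n]=0$. The complementary vector $x_1$ commutes with $x_n$ but not with $x_2,\dots,x_{n-1}$, and these latter vectors pairwise commute (all brackets among $x_2,\dots,x_n$ vanish). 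Thus the hypotheses of Corollary \ref{thecorr} are satisfied with $\{i_1,\dots,i_r\}=\{2,\dots,n-1\}$, and $\mathfrak{g}$ is $\mathcal{I}$-null.

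For the Heisenberg algebra $\mathfrak{h}_m$, with basis $e_1,\dots,e_m,f_1,\dots,f_m,z$ and relations $[e_i,f_i]=z$, the mild complication is that for $m\geqslant 2$ there is no codimension 1 \emph{abelian} ideal, so the filiform argument cannot be copied verbatim. I would instead argue by induction on $m$. Removing $e_1$ and setting $\mathfrak{g}_2=\mathrm{span}\{e_2,\dots,e_m,f_1,\dots,f_m,z\}$ gives a codimension 1 ideal (since $[e_1,\mathfrak{g}_2]\subset\Cmath z\subset\mathfrak{g}_2$), in which $f_1$ has become central, so that $\mathfrak{g}_2\cong\mathfrak{h}_{m-1}\oplus\Cmath f_1$. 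By the induction hypothesis together with the direct-product part of the preceding Lemma, $\mathfrak{g}_2$ is $\mathcal{I}$-null, the base case $m=1$ being the abelian algebra $\mathrm{span}\{f_1,z\}$. Since $e_1$ fails to commute only with $f_1$ (a single exceptional index, vacuously pairwise commuting), Corollary \ref{thecorr} applies and $\mathfrak{h}_m$ is $\mathcal{I}$-null.

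I expect the only real point of care to be the Heisenberg case, precisely because of the missing abelian ideal. If one prefers to avoid the induction, the same conclusion follows directly from the characterization that $\mathfrak{g}$ is $\mathcal{I}$-null iff $\mathcal{C}^2\mathfrak{g}\subset\ker B$ for every $B\in\left(S^2\mathfrak{g}^*\right)^{\mathfrak{g}}$: using the invariance identity $B([Z,X],Y)=-B(X,[Z,Y])$ with the relation $z=[e_j,f_j]$, one checks $B(z,e_k)=0$, $B(z,f_k)=0$ and $B(z,z)=0$ for all $k$, so that $\mathcal{C}^2\mathfrak{h}_m=\Cmath z\subset\ker B$. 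Either route closes the argument; the verification for Corollary \ref{thecorr} and the center computation are both routine.
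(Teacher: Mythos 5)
Your proposal is correct and follows exactly the route the paper intends: the corollary is stated as an immediate consequence of Corollary \ref{thecorr}, and you apply that corollary with the natural codimension~1 ideals (the abelian ideal $\mathrm{span}\{x_2,\dots,x_n\}$ in the filiform case, and the ideal obtained by deleting $e_1$, handled by induction on $m$ together with the direct-product lemma, in the Heisenberg case). Both your verification of the commutativity hypotheses and your alternative direct computation showing $\mathcal{C}^2\mathfrak{h}_m=\Cmath z\subset\ker B$ are sound.
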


\par
\begin{corollary}
Any Lie algebra  containing some
$\mathcal{I}$-null
codimension 1 ideal
is $\mathcal{I}$-exact.
\end{corollary}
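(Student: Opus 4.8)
The plan is to apply equation (\ref{equationI_Bproposition}) of Proposition \ref{prop1} directly, observing that the hypothesis of $\mathcal{I}$-nullity on the codimension $1$ ideal kills the second summand and leaves only a manifest coboundary. So the whole argument reduces to checking that the decomposition of Proposition \ref{prop1}(i) degenerates in the presence of the hypothesis.

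First I would set up the data exactly as in Proposition \ref{prop1}: choose a basis $(x_1, \ldots, x_N)$ of $\mathfrak{g}$ with $x_1 \notin \mathfrak{g}_2$ and $x_2, \ldots, x_N \in \mathfrak{g}_2$, let $\pi_2$ be the projection onto $\mathfrak{g}_2$, and let $(\omega^1, \ldots, \omega^N)$ be the dual basis. Fix an arbitrary $B \in \left(S^2 \mathfrak{g}^*\right)^{\mathfrak{g}}$ and denote by $B_2$ its restriction to $\mathfrak{g}_2 \times \mathfrak{g}_2$. The point to record is that $B_2 \in \left(S^2 \mathfrak{g}_2^*\right)^{\mathfrak{g}_2}$: since $\mathfrak{g}_2$ is a subalgebra, the invariance identity $B([Z,X],Y) = -B(X,[Z,Y])$ for $X,Y,Z \in \mathfrak{g}_2$ restricts without loss, so $B_2$ is indeed an invariant symmetric form on $\mathfrak{g}_2$ (this is already built into the notation of the proposition).

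Now comes the key step. Because $\mathfrak{g}_2$ is $\mathcal{I}$-null, its Koszul map vanishes identically, so in particular $I_{B_2} = 0$. Substituting this into (\ref{equationI_Bproposition}) gives
$$I_B = d(\omega^1 \wedge f), \qquad f = B(\cdot\, , x_1) \in \mathfrak{g}^*,$$
exhibiting $I_B$ as a coboundary, i.e. $I_B \in B^3(\mathfrak{g},\Cmath)$. Since $B$ was an arbitrary invariant symmetric form, this shows $\text{Im\,}\mathcal{I} \subset B^3(\mathfrak{g},\Cmath)$, which is exactly the assertion that $\mathfrak{g}$ is $\mathcal{I}$-exact.

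I do not expect any genuine obstacle here: the corollary is an immediate consequence of the decomposition already established in Proposition \ref{prop1}(i), the only thing to verify being the routine invariance of the restricted form $B_2$ on the ideal $\mathfrak{g}_2$. The one point worth flagging is that $\mathfrak{g}$ need not itself be $\mathcal{I}$-null, since the coboundary $d(\omega^1 \wedge f)$ need not vanish; accordingly the conclusion is $\mathcal{I}$-exactness rather than $\mathcal{I}$-nullity, in agreement with the statement.
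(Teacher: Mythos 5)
Your proof is correct and follows exactly the route the paper intends: the corollary is stated without proof precisely because it is an immediate consequence of equation (\ref{equationI_Bproposition}) in Proposition \ref{prop1}(i), where $\mathcal{I}$-nullity of $\mathfrak{g}_2$ kills the term $I_{B_2}\circ(\pi_2\times\pi_2\times\pi_2)$ and leaves $I_B = d(\omega^1\wedge f) \in B^3(\mathfrak{g},\Cmath)$. Your remark that the restriction $B_2$ is invariant on the ideal, and your closing observation that one only gets $\mathcal{I}$-exactness rather than $\mathcal{I}$-nullity, are both accurate.
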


\begin{corollary}
\label{c11}
Suppose that  the Lie algebra $\mathfrak{g}$ is such that
$\dim{\text{Im\,}{\mathcal{I}}}=0 \text{ or }1. $
Let $\tau \in \text{Der } \frak{g}$ such that
$\tau x_k \in
\mathcal{C}^2 \mathfrak{g}$ $\forall k\geqslant 2$
where $(x_1, \cdots, x_N)$ is some basis of
$\frak{g}.$
Denote $\tilde{\frak{g}}_{\tau} = \Cmath \, \tau \oplus \frak{g}$
the Lie algebra obtained by adjoining the derivation $\tau$ to $\frak{g},$
and by $\tilde{\mathcal{I}}$ the Koszul map of
$\tilde{\frak{g}}_{\tau}.$
Then  $\dim{\text{Im\,}{\tilde{\mathcal{I}}}}=0$ if
$\dim{\text{Im\,}{\mathcal{I}}}=0,$
and   $\dim{\text{Im\,}{\tilde{\mathcal{I}}}}=0 \text{ or }1$
if $\dim{\text{Im\,}{\mathcal{I}}}=1.$

\end{corollary}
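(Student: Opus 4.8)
The plan is to view $\tilde{\mathfrak{g}}_{\tau}$ through Proposition~\ref{prop1}, with $\mathfrak{g}$ as the codimension-$1$ ideal and $\tau$ in the role of the distinguished vector lying outside the ideal (note $[\tilde{\mathfrak{g}}_{\tau},\tilde{\mathfrak{g}}_{\tau}]\subset\mathfrak{g}$, so the dual form $\omega^{\tau}$ with $\omega^{\tau}(\tau)=1$, $\omega^{\tau}|_{\mathfrak{g}}=0$, is closed). Fix $\tilde{B}\in\left(S^2\tilde{\mathfrak{g}}_{\tau}^*\right)^{\tilde{\mathfrak{g}}_{\tau}}$ and let $B=\tilde{B}|_{\mathfrak{g}\times\mathfrak{g}}$, which is invariant on $\mathfrak{g}$. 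Evaluating $I_{\tilde{B}}$ on basis vectors (equivalently, applying Proposition~\ref{prop1}(i)) yields the decomposition
\[
I_{\tilde{B}}=\omega^{\tau}\wedge\alpha_B+I_B,\qquad \alpha_B(u,v):=B(\tau u,v)\quad(u,v\in\mathfrak{g}),
\]
where $I_B$ is viewed as a $3$-form on $\tilde{\mathfrak{g}}_{\tau}$ via the projection onto $\mathfrak{g}$. The form $\alpha_B$ is genuinely alternating because invariance of $\tilde{B}$ makes $\tau$ act skew-symmetrically, $B(\tau u,v)=\tilde{B}([\tau,u],v)=-\tilde{B}(u,[\tau,v])=-B(u,\tau v)$; moreover $\omega^{\tau}\wedge\alpha_B$ and $I_B$ sit in the complementary summands $\omega^{\tau}\wedge\bigwedge^2\mathfrak{g}^*$ and $\bigwedge^3\mathfrak{g}^*$ of $\bigwedge^3\tilde{\mathfrak{g}}_{\tau}^*$.

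The crux, and the step I expect to require the most care, is to show that both $\alpha_B$ and $I_B$ are determined by the single restriction $r(B):=B|_{\mathcal{C}^2\mathfrak{g}\times\mathfrak{g}}$. For $I_B(u,v,w)=B([u,v],w)$ this is immediate since $[u,v]\in\mathcal{C}^2\mathfrak{g}$. For $\alpha_B$ I would evaluate on the given basis $(x_1,\dots,x_N)$: if $i,j\geq 2$ then $\alpha_B(x_i,x_j)=B(\tau x_i,x_j)$ with $\tau x_i\in\mathcal{C}^2\mathfrak{g}$ by hypothesis. The entries involving the one unconstrained index $1$ are handled by antisymmetry: for $j\geq 2$ one writes $\alpha_B(x_1,x_j)=-\alpha_B(x_j,x_1)=-B(\tau x_j,x_1)$, where again $\tau x_j\in\mathcal{C}^2\mathfrak{g}$; the diagonal entries vanish. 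Thus every entry of $\alpha_B$ has the form $\pm B(c,x_l)$ with $c\in\mathcal{C}^2\mathfrak{g}$, so $\alpha_B$, and hence the whole form $I_{\tilde{B}}$, depends only on $r(B)$. This is precisely where the hypothesis $\tau x_k\in\mathcal{C}^2\mathfrak{g}$ $(k\geq 2)$ enters, the antisymmetry argument being what disposes of the exceptional vector $x_1$.

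Finally I would carry out a dimension comparison. The linear map $B\mapsto r(B)$ on the invariant forms of $\mathfrak{g}$ has the same kernel as $\mathcal{I}$, since $r(B)=0$ means $B(\mathcal{C}^2\mathfrak{g},\mathfrak{g})=0$, which is exactly $I_B=0$; hence $\dim\{r(B):B\text{ invariant on }\mathfrak{g}\}=\dim\text{Im\,}{\mathcal{I}}$. By the previous paragraph $I_{\tilde{B}}$ factors through $r(\tilde{B}|_{\mathfrak{g}})$, so $\tilde{\mathcal{I}}$ descends to a linear map on the space $\{r(\tilde{B}|_{\mathfrak{g}})\}$, which is contained in $\{r(B):B\text{ invariant on }\mathfrak{g}\}$. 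Therefore
\[
\dim\text{Im\,}{\tilde{\mathcal{I}}}\leq\dim\{r(\tilde{B}|_{\mathfrak{g}})\}\leq\dim\text{Im\,}{\mathcal{I}},
\]
and both assertions of the corollary follow at once: if $\dim\text{Im\,}{\mathcal{I}}=0$ then $\dim\text{Im\,}{\tilde{\mathcal{I}}}=0$, while if $\dim\text{Im\,}{\mathcal{I}}=1$ then $\dim\text{Im\,}{\tilde{\mathcal{I}}}\in\{0,1\}$.
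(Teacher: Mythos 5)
Your proposal is correct, and its first half coincides with the paper's own argument: the paper likewise applies the Proposition \ref{prop1}-type decomposition with $\mathfrak{g}$ as the codimension-one ideal of $\tilde{\mathfrak{g}}_\tau$, obtaining $I_{\tilde B}=\omega^\tau\wedge df_\tau+I_{B_2}\circ(\pi_2\times\pi_2\times\pi_2)$ where, up to an immaterial sign, $df_\tau$ is your $\alpha_B$; and the paper's formula (\ref{corrgtau}) is exactly your entry-by-entry observation that, thanks to $\tau x_k\in\mathcal{C}^2\mathfrak{g}$ ($k\geqslant 2$) and invariance, every entry of this $2$-form is a value of $B_2$ on $\mathcal{C}^2\mathfrak{g}\times\mathfrak{g}$ (your antisymmetry trick for the exceptional index $1$ is what the paper's use of the projection $\pi_3$ and invariance accomplishes). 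Where you genuinely diverge is the endgame. The paper argues by cases: if $\mathcal{I}=0$, all entries vanish and $I_{\tilde B}=0$; if $\dim\text{Im\,}\mathcal{I}=1$, it fixes $C$ with $I_C\neq 0$, writes $I_{B_2}=\lambda I_C$, and checks that each entry of $I_{\tilde B}$ equals $\lambda$ times the corresponding quantity built from $C$, whence $\dim\text{Im\,}\tilde{\mathcal{I}}\leqslant 1$. You instead package the same mechanism into the restriction map $r(B)=B|_{\mathcal{C}^2\mathfrak{g}\times\mathfrak{g}}$, note $\ker r=\ker\mathcal{I}$, and conclude by rank--nullity that $\tilde{\mathcal{I}}$ factors linearly through a space of dimension at most $\dim\text{Im\,}\mathcal{I}$. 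This buys the uniform inequality $\dim\text{Im\,}\tilde{\mathcal{I}}\leqslant\dim\text{Im\,}\mathcal{I}$ with no case distinction, so the hypothesis $\dim\text{Im\,}\mathcal{I}\leqslant 1$ in the statement is revealed to be unnecessary for monotonicity; the paper's version is more explicit (it exhibits the scalar $\lambda$, hence a concrete spanning form for $\text{Im\,}\tilde{\mathcal{I}}$ in the one-dimensional case), but yours is shorter and strictly more general.
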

\begin{proof}
Let $B \in \left( S^2 {\tilde{\frak{g}}}_\tau {}^*\right)^{\tilde{\frak{g}}_\tau}.$
One has
\begin{equation}
\label{I_Bcorr}
I_B= \omega^{\tau}\wedge df_\tau +I_{B_2} \circ (\pi_2 \times \pi_2 \times \pi_2)
\end{equation}
where $(\tau, x_1, \cdots, x_N)$ is the basis of
$\tilde{\frak{g}}_{\tau},$
$(\omega^\tau, \omega^1, \cdots, \omega^N)$  the dual basis, $B_2$ the restriction
of $B$ to $\frak{g}$, $f_\tau = B(\tau, \cdot \, )$
and $\pi_2$ the projection on $\frak{g}.$
We will also use the projection
$\pi_3$ on $\text{vect}(x_2, \cdots, x_N).$
For $\tilde{X} , \tilde{Y} \in \tilde{\frak{g}}_{\tau},$
 $\tilde{X} = \omega^\tau(\tilde{X}) \tau + X,$
 $\tilde{Y} = \omega^\tau(\tilde{Y}) \tau +Y,$
 $X=\pi_2( \tilde{X}),
 Y=\pi_2( \tilde{Y}),$ so that
 $
 df_\tau( \tilde{X},\tilde{Y})
=  - B(\tau,  [\tilde{X},\tilde{Y}])
= - B(\tau,  [X,Y])
= - B(\tau,[\omega^1(X)x_1+ \pi_3(X),\omega^1(Y)x_1+ \pi_3(Y)])
 = - \omega^1(X)B(\tau,[x_1,\pi_3(Y)])
 $\linebreak[4]$
+ \omega^1(Y)B(\tau,[x_1,\pi_3(X)])
- B(\tau,[\pi_3(X),\pi_3(Y)]), $
hence
\begin{equation}
\label{corrgtau}
 df_\tau( \tilde{X},\tilde{Y})
 =
\omega^1(X)B_2(\tau \pi_3(Y),x_1)
- \omega^1(Y)B_2(\tau \pi_3(X),x_1)
- B_2(\tau \pi_3(X),\pi_3(Y)).
 \end{equation}
 Note that
$\tau \pi_3(X) ,\tau \pi_3(Y) \in \mathcal{C}^2\frak{g}$
by the hypotheses.
Suppose first that
$\frak{g}$ is
$\mathcal{I}$-null.
Then $B_2(\tau \pi_3(Y),x_1),
B_2(\tau \pi_3(X),x_1),
B_2(\tau \pi_3(X),\pi_3(Y))$ all vanish.
From Equations
(\ref{I_Bcorr}),
(\ref{corrgtau}),
$\tilde{\frak{g}}_{\tau}$ is
$\tilde{\mathcal{I}}$-null.
Suppose now that
$\frak{g}$ verifies
$\dim{\text{Im\,}{\mathcal{I}}}=1$ and let
$C \in \left( S^2 {{\frak{g}}} {}^*\right)^{{\frak{g}}}$
with
$I_C \neq 0.$
If  $\tilde{\frak{g}}_{\tau}$ is not
$\tilde{\mathcal{I}}$-null
we may suppose that $I_B  \neq  0.$
There exists $\lambda \in \Cmath$ such that
$I_{B_2} =\lambda I_C.$
Then
$B_2(\tau \pi_3(Y),x_1)=\lambda C(\tau \pi_3(Y),x_1),$
$B_2(\tau \pi_3(X),x_1)
=\lambda C(\tau \pi_3(X),x_1),$
$B_2(\tau \pi_3(X),\pi_3(Y)) =\lambda C(\tau \pi_3(X),\pi_3(Y)).$
It follows from Equations (\ref{I_Bcorr}),
(\ref{corrgtau}),
that   $\dim{\text{Im\,}{\tilde{\mathcal{I}}}}=1.$
\end{proof}

\begin{definition}
A Lie algebra
$\mathfrak{g}$
is said to be  quadratic if there exists  a nondegenerate invariant bilinear form on
$\mathfrak{g}.$
\end{definition}

Clearly, quadratic nonabelian Lie algebras are not $\mathcal{I}$-null.

\begin{example}
This example is an illustration to Corollary \ref{c11}.
The nilpotent Lie algebra
$\mathfrak{g}_{7,2.4}$ has commutation relations
$[x_1,x_2]=x_3,$
$[x_1,x_3]=x_4,$
$[x_1,x_4]=x_5,$
$[x_1,x_5]=x_6,$
$[x_2,x_5]=-x_7,$
$[x_3,x_4]=x_7.$
$\mathfrak{g}_{7,2.4}$ is quadratic and
$\dim{\text{Im\,}{\mathcal{I}}}=1.$
The  elements of $\text{Der } \mathfrak{g}_{7,2.4} (\text{mod } \text{ad} \mathfrak{g}_{7,2.4})$ are
\begin{equation}
\tau=
\begin{pmatrix}
\xi^1_1&0&0&0&0&0&0\\
\xi^2_1&\xi^2_2&0&0&0&0&0\\
0&0&\xi^1_1+\xi^2_2&0&0&0&0\\
0&0&0&2\xi^1_1+\xi^2_2&0&0&0\\
0&\xi^5_2&0&0&3\xi^1_1+\xi^2_2&0&0\\
\xi^6_1&\xi^6_2&\xi^5_2&0&0&4\xi^1_1+\xi^2_2&0\\
\xi^7_1&0&0&0&0&-\xi^1_2&3\xi^1_1+2\xi^2_2
\end{pmatrix}
\end{equation}
$\tau$ is nilpotent if $\xi^1_1=\xi^2_2=0.$
Denote the nilpotent $\tau$ by
$(\xi^2_1;\xi^5_2;\xi^6_1,\xi^6_2;\xi^7_1).$
Now, projectively equivalent derivations $\tau, \tau^{\prime}$ (see \cite{ART})
give isomorphic
$\tilde{\frak{g}}_{\tau},$
$\tilde{\frak{g}}_{\tau^{\prime}}.$
By reduction using projective equivalence, we are reduced to the following cases:
Case 1. $\xi^2_1\neq 0 : (1;\varepsilon;0,\eta;0);$
Case 2. $\xi^2_1= 0 : (0;\varepsilon;0,\eta;\lambda);$
where $\varepsilon, \eta, \lambda= 0, 1.$
In both cases
$\tilde{\frak{g}}_{\tau}$
is $\mathcal{I}$-null,
except when $\tau=0$ in case 2 where
$\tilde{\frak{g}}_{\tau}$
is the direct product
$\Cmath \times \mathfrak{g}_{7,2.4}$ which is quadratic.
Hence any indecomposable
8-dimensional nilpotent Lie algebra containing a subalgebra isomorphic to
$\mathfrak{g}_{7,2.4}$
is $\mathcal{I}$-null, though
$\mathfrak{g}_{7,2.4}$ is quadratic.
That is in line with the fact  that,
from the   double extension method of   \cite{revoy}, \cite{medina},
any indecomposable quadratic solvable Lie algebra
is
a  \textit{double extension} of a quadratic solvable  Lie algebra  by $\Cmath.$

\end{example}

\begin{example}
\rm
Among the 170 (non isomorphic) nilpotent complex Lie algebras of dimension
$\leqslant 7,$
only a few are not
$\mathcal{I}$-null.
Those are listed in
 Table 1
in the classification of \cite{hindawi}, \cite{Magnin1}
(they are all $\mathcal{I}$-exact).
 Table 1 gives for each of them
$\dim \left(S^2 \mathfrak{g}^*\right) ^{\mathfrak{g}},$
 a basis for $ \left(\left(S^2 \mathfrak{g}^*\right) ^{\mathfrak{g}}
\left/ \right. \ker{\mathcal{I}}\right)$
(which in those cases is one-dimensional),
and the corresponding $I_Bs$.
The results in Table 1 are new and have been obtained, first by explicit
computation of all invariant bilinear forms on each one of the 170 Lie algebras with
the computer algebra system \textit{Reduce} and a program similar to
those in \cite{ART},\cite{Magnin1},
and second by hand calculation of $I_B$ for
non $\mathcal{I}$-null Lie algebras.
$\blacksquare$ denotes quadratic Lie algebras;
for $\omega, \pi \in
\mathfrak{g}^*$,
$\odot $
stands for  the symmetric product
 $\omega \odot \pi =  \omega \otimes \pi +
 \pi \otimes  \omega;$
 $\omega^{i,j,k}$ stands for  $ \omega^i\wedge   \omega^j\wedge  \omega^k.$
\end{example}
\begin{table}[h]%
\caption{Non $\mathcal{I}$-null nilpotent complex Lie algebras of dimension $\leqslant 7$.}
\begin{flushleft}
{\fontsize{8}{7.5} \selectfont
\begin{tabular}{||p{2.2cm}@{\;}|@{\;}p{1.8cm}@{}|@{\;}p{5.3cm}@{}|@{\;}p{2.1cm}||}\hline
\hline
\textbf{algebra}&
\textbf{
$\dim \left(S^2 \mathfrak{g}^*\right) ^{\mathfrak{g}}$
}&
 basis for $ \left(S^2 \mathfrak{g}^*\right) ^{\mathfrak{g}}
\left/ \right. \ker{\mathcal{I}}$
&
$I_B$.
\\ \hline

$\mathfrak{g}_{5,4}\, \blacksquare$
&4
&$\omega^1 \odot \omega^5
-\omega^2 \odot \omega^4
+\omega^3 \otimes \omega^3$
&$\omega^{1,2,3}=d\omega^{1,5}$\\ \hline

$\mathfrak{g}_{6,3} \,\blacksquare$
&7
&$\omega^1 \odot \omega^6
-\omega^2 \odot \omega^5
+\omega^3 \odot \omega^4$
&$\omega^{1,2,3}=d\omega^{1,6}$\\ \hline

$\mathfrak{g}_{6,14}$
&4
&$\omega^1 \odot \omega^6
-\omega^2 \odot \omega^4
+\omega^3 \otimes \omega^3$
&$\omega^{1,2,3}=-d\omega^{1,4}$
\\ \hline

$\mathfrak{g}_{5,4} \times \Cmath$ \, $\blacksquare$
&7
&$\omega^1 \odot \omega^5
-\omega^2 \odot \omega^4
+\omega^3 \otimes \omega^3$
&$\omega^{1,2,3}=d\omega^{1,5}$\\ \hline

$\mathfrak{g}_{5,4} \times \Cmath^2$ \, $\blacksquare$
&11
&$\omega^1 \odot \omega^5
-\omega^2 \odot \omega^4
+\omega^3 \otimes \omega^3$
&$\omega^{1,2,3}=d\omega^{1,5}$\\ \hline

$\mathfrak{g}_{6,3} \times \Cmath$ \, $\blacksquare$
&11
&$\omega^1 \odot \omega^6
-\omega^2 \odot \omega^5
+\omega^3 \odot \omega^4
$
&$\omega^{1,2,3}=d\omega^{1,6}$\\ \hline

$\mathfrak{g}_{7,0.4(\lambda)},$
\linebreak[4]
$\mathfrak{g}_{7,0.5},$
$\mathfrak{g}_{7,0.6},$
$\mathfrak{g}_{7,1.02},$
$\mathfrak{g}_{7,1.10},$
$\mathfrak{g}_{7,1.13},$
$\mathfrak{g}_{7,1.14},$
$\mathfrak{g}_{7,1.17}$
&4
&$\omega^1 \odot \omega^5
-\omega^2 \odot \omega^4
+\omega^3 \otimes \omega^3$
&$\omega^{1,2,3}=d\omega^{1,5}$\\ \hline

$\mathfrak{g}_{7,1.03}$
&4
&$\omega^1 \odot \omega^6
-\omega^2 \odot \omega^4
+\omega^3 \otimes \omega^3$
&$\omega^{1,2,3}=d\omega^{1,6}$\\ \hline

$\mathfrak{g}_{7,2.2}$
&7
&$\omega^1 \odot \omega^4
-\omega^2 \odot \omega^6
+\omega^3 \odot \omega^5$
&$\omega^{1,2,3}=d\omega^{1,4}$\\ \hline

$\mathfrak{g}_{7,2.4}\, \blacksquare$
&4
&$\omega^1 \odot \omega^7
+\omega^2 \odot \omega^6
-\omega^3 \odot \omega^5
+\omega^4 \otimes \omega^4$
&$\omega^{1,3,4}-\omega^{1,2,5}=d\omega^{1,7}$\\ \hline

$\mathfrak{g}_{7,2.5}, $
$\mathfrak{g}_{7,2.6},$
$\mathfrak{g}_{7,2.7},$
$\mathfrak{g}_{7,2.8},$
$\mathfrak{g}_{7,2.9},$
&4
&$\omega^1 \odot \omega^5
-\omega^2 \odot \omega^4
+\omega^3 \otimes \omega^3$
&$\omega^{1,2,3}=d\omega^{1,5}$\\ \hline

$\mathfrak{g}_{7,2.18}$
&7
&$\omega^1 \odot \omega^6
-\omega^2 \odot \omega^5
+\omega^4 \otimes \omega^4$
&$\omega^{1,2,4}=d\omega^{1,6}$\\ \hline

$\mathfrak{g}_{7,2.44},$
$\mathfrak{g}_{7,3.6}$
&7
&$\omega^1 \odot \omega^6
-\omega^2 \odot \omega^5
+\omega^3 \odot \omega^4
$
&$\omega^{1,2,3}=d\omega^{1,6}$\\ \hline

$\mathfrak{g}_{7,3.23}$
&7
&$\omega^1 \odot \omega^6
-\omega^2 \odot \omega^5
+\omega^3 \otimes \omega^3
$
&$\omega^{1,2,3}=d\omega^{1,6}$\\ \hline
 \hline
\end{tabular}
}
\end{flushleft}
\end{table}
\begin{remark}
There are nilpotent Lie algebras of higher dimension with
\linebreak[4]
   $ \dim \left(S^2 \mathfrak{g}^*\right) ^{\mathfrak{g}}
\left/ \right. \ker{\mathcal{I}} >1.$
For example, in the case of the $10$ dimensional Lie algebra
$\mathfrak{g}$ with commutation relations
$[x_1,x_2]=x_5,
[x_1,x_3]=x_6,
[x_1,x_4]=x_7,
[x_2,x_3]=x_8,
[x_2,x_4]=x_9,
[x_3,x_4]=x_{10},$
   $ \dim \left(S^2 \mathfrak{g}^*\right) ^{\mathfrak{g}}
\left/ \right. \ker{\mathcal{I}} = 4,$
and in the analogous case
of the 15 dimensional nilpotent
Lie algebra with $5$ generators one has
\linebreak[4]
   $ \dim \left(S^2 \mathfrak{g}^*\right) ^{\mathfrak{g}}
\left/ \right. \ker{\mathcal{I}} = 10.$
Those algebras are  $\mathcal{I}$-exact and not quadratic.
\end{remark}
\begin{example}
The quadratic $5$-dimensional nilpotent Lie algebra
$\mathfrak{g}_{5,4}$
has commutation relations
$[x_1,x_2]=x_3,$
$[x_1,x_3]=x_4,$
$[x_2,x_3]=x_5.$
Consider the $10$-dimensional direct product
$\mathfrak{g}_{5,4} \times \mathfrak{g}_{5,4},$
with the commutation relations:
$[x_1,x_2]=x_5,$
$[x_1,x_5]=x_6,$
$[x_2,x_5]=x_7,$
$[x_3,x_4]=x_8,$
$[x_3,x_8]=x_9,$
$[x_4,x_8]=x_{10}.$
The only $11$-dimensional nilpotent Lie algebra
with an invariant bilinear form which reduces to
$B_1=\omega^1 \odot \omega^7
-\omega^2 \odot \omega^6
+\omega^5 \otimes \omega^5,$
$B_2=\omega^3 \odot \omega^{10}
-\omega^4 \odot \omega^9
+\omega^8 \otimes \omega^8,$
on respectively the first and second factor is
the direct product
$\Cmath \times \mathfrak{g}_{5,4} \times \mathfrak{g}_{5,4},$
\end{example}

\begin{example}
The 4-dimensional solvable "diamond" Lie algebra $\frak{g}$ with basis
$(x_1,x_2,x_3,x_4)$ and commutation relations
$[x_1,x_2]=x_3,
[x_1,x_3]=-x_2,
[x_2,x_3]=x_4$
cannot be obtained as in Lemma
\ref{lemmeborel}.
Here
 $ \dim \left(\left(S^2 \mathfrak{g}^*\right) ^{\mathfrak{g}}
\left/ \right. \ker{\mathcal{I}}\right)=1,$
with basis element
$B= \omega^1 \odot \omega^4  + \omega^2 \otimes \omega^2
+ \omega^3 \otimes \omega^3.$
 $I_B= \omega^{1,2,3} =d\omega^{1,4};$
$\frak{g}$ is quadratic and $\mathcal{I}$-exact.
In fact, one verifies that all other solvable 4-dimensional Lie solvable Lie algebras are
 $\mathcal{I}$-null (for a list, see e.g. \cite{ovando}).
 For a complete description of Leibniz and Lie deformations of the
 diamond Lie algebra
 (and a study of the case of  $\mathfrak{g}_{5,4}$), see
 \cite{jointpaper}.
\end{example}
\par


\section{Case of a nilradical}

We now state and prove our main result.
The proof is by case analysis over
the simple complex finite dimensional Lie algebras.
In the classical cases, the point consists in an inductive use of
 Corollary \ref{thecorr}.
 In the exceptional cases, we either utilize        directly the
 commutation relations ($G_2,F_4$), or make use of a certain property
 of the pattern of positive roots, which we call property $(\mathcal{P})$
 ($E_6,E_7,E_8$).

\begin{theorem}
\label{corollarynilradical}
Any  nilradical $\frak{g}$
of a Borel subalgebra
of a finite-dimensional  semi-simple
Lie algebra is
$\mathcal{I}$-null.
\end{theorem}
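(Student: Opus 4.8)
The plan is to prove that every invariant symmetric bilinear form $B$ on $\mathfrak{n}=\bigoplus_{\alpha\in\Delta_+}\mathfrak{g}^\alpha$ annihilates $\mathcal{C}^2\mathfrak{n}$. Since $\mathfrak{n}$ is generated by the simple root vectors $X_\alpha$ ($\alpha\in S$), we have $\mathcal{C}^2\mathfrak{n}=\bigoplus_{\mathrm{ht}(\gamma)\ge 2}\mathfrak{g}^\gamma$, so the goal is $B(X_\gamma,X_\delta)=0$ for every $\gamma$ of height $\ge 2$ and every $\delta\in\Delta_+$. I would first reduce to homogeneous $B$: each $H\in\mathfrak{h}$ acts on $\mathfrak{n}$ by a derivation, and since the natural action of a derivation on an invariant symmetric form is again invariant, $(S^2\mathfrak{n}^*)^{\mathfrak{n}}$ is an $\mathfrak{h}$-module; decomposing into weight spaces lets me assume $B(\mathfrak{g}^\mu,\mathfrak{g}^\nu)=0$ unless $\mu+\nu=\sigma$ for one fixed weight $\sigma$. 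With this normalization $\mathcal{I}$-nullity is precisely the statement that every homogeneous invariant $B$ is supported only on pairs of \emph{simple} roots.

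The inductive engine is Corollary~\ref{thecorr}. Removing one simple root vector $x_1=X_{\alpha_i}$ leaves $\mathfrak{g}_2=\bigoplus_{\beta\in\Delta_+\setminus\{\alpha_i\}}\mathfrak{g}^\beta$, a codimension-$1$ ideal (a simple root is never a sum of two positive roots). Corollary~\ref{thecorr} then reduces $\mathcal{I}$-nullity of $\mathfrak{n}$ to that of $\mathfrak{g}_2$ as soon as its hypothesis holds, namely that the vectors $X_\beta$ with $\alpha_i+\beta\in\Delta_+$ commute pairwise; equivalently, no two roots $\beta,\beta'$ with $\alpha_i+\beta,\alpha_i+\beta'\in\Delta_+$ satisfy $\beta+\beta'\in\Delta$. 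I would organize the proof as a downward induction on $\dim\mathfrak{g}$, enlarging the statement to the closed subsets $\Gamma\subset\Delta_+$ created by successive removals and terminating at an abelian algebra, the task at each stage being to produce a current generator whose removal meets this commuting condition.

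For the classical types $A_n,B_n,C_n,D_n$ I expect the induction to close by peeling an endpoint node of the Dynkin diagram and verifying the commuting condition from the explicit combinatorics of $\Delta_+$ (roots as intervals in $A_n$, and the short/long patterns of $B_n,C_n,D_n$). The exceptional types behave differently: for $G_2$ and $F_4$ the root system is small enough to check $\mathcal{I}$-nullity directly from the commutation relations, confirming by hand that $B$ cannot live on a non-simple pair. For $E_6,E_7,E_8$ the commuting condition can fail for every simple root at some stage of the induction, and there one replaces the naive peeling by a structural feature of the positive-root pattern---the property $(\mathcal{P})$---which still forces, through the invariance identity $B([X_\alpha,U],V)=-B(U,[X_\alpha,V])$, that a homogeneous invariant form vanish on $\mathcal{C}^2\mathfrak{n}$.

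The main obstacle is this combinatorial core, concentrated in the $E$-series. The transparent part is the single-step relation: writing a height-$\ge 2$ root as $\gamma=\alpha+\gamma'$ with $\alpha\in S$ gives $X_\gamma=c^{-1}[X_\alpha,X_{\gamma'}]$, whence $B(X_\gamma,X_\delta)=-c\,B(X_{\gamma'},X_{\alpha+\delta})$ (read as $0$ when $\alpha+\delta\notin\Delta$). This move transfers height from the first argument to the second while preserving the total weight $\sigma$, so $B(X_\gamma,X_\delta)$ vanishes the instant some admissible move exits the root system on the $\delta$-side. Proving that for $\Gamma=\Delta_+$ one can always be driven to such a dead end---equivalently, that the peeling condition can be sustained throughout---is exactly what fails for a general closed $\Gamma$ (as the non-$\mathcal{I}$-null algebra $\mathfrak{g}_{5,4}$ shows), and is what property $(\mathcal{P})$ is designed to guarantee for $E_6,E_7,E_8$; pinning down an enlarged inductive statement under which the removed-generator subalgebras remain peelable is the remaining delicate point.
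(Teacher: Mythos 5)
Your outline reproduces the paper's overall architecture (peeling codimension-1 ideals via Corollary \ref{thecorr}, case analysis by type, direct computation for $G_2$ and $F_4$, property $(\mathcal{P})$ for the $E$-series; your preliminary reduction to $\mathfrak{h}$-homogeneous $B$ is correct but the paper never needs it), yet as it stands it is not a proof, for two concrete reasons. First, the combinatorial core is left open by your own admission: you never exhibit an order of removals whose commuting condition can be verified, and you never formulate the inductive statement over closed subsets. The paper resolves precisely this point by \emph{ordering the roots by height}: for $E_6,E_7,E_8$ it adjoins root vectors $X_{\delta_1},X_{\delta_2},\dots$ in order of \emph{decreasing} height, so each intermediate span is automatically a subalgebra and a codimension-1 ideal in the next, and the hypothesis of Corollary \ref{thecorr} at every single step is literally an instance of $(\mathcal{P})$ --- no ``enlarged inductive statement'' is needed. (Moreover $(\mathcal{P})$ holds in \emph{every} simply-laced system: if $\alpha+\beta$, $\alpha+\gamma$, $\beta+\gamma$ were all roots, then all three inner products equal $-1$ and $\lVert\alpha+\beta+\gamma\rVert^2=6-6=0$, impossible for positive roots; so once the order is fixed, your worry about general closed subsets evaporates for types $A$, $D$, $E$. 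Two of your side remarks are also off: with $(\mathcal{P})$ in force the commuting condition never fails, for simple roots or otherwise; and $\mathfrak{g}_{5,4}$ is not of the form $\bigoplus_{\gamma\in\Gamma}\mathfrak{g}^\gamma$ inside any finite-dimensional $\Delta_+$ --- its generalized Cartan matrix is affine.)

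Second, and more seriously, your plan for the classical types provably fails for $B_n$. There the commuting condition of Corollary \ref{thecorr} cannot be arranged by any removal order: already in $B_3$, consider the step at which the first short root, say $e_1$, is removed. At that moment $e_2$, $e_3$, $e_1+e_2$, $e_1+e_3$ are still present (each $e_1+e_j$ has the decomposition $e_1+e_j$ with both summands present, so it cannot yet have become a generator), hence $X_{e_2}$ and $X_{e_3}$ are both non-commuting partners of $X_{e_1}$, while $e_2+e_3$ is again a root --- the hypothesis of Corollary \ref{thecorr} is violated no matter what was removed earlier. The paper anticipates exactly this obstruction: in case $B_n$ it does \emph{not} apply Corollary \ref{thecorr}, but returns to that corollary's proof and kills the offending terms by hand, writing
$B(\tilde{v}_{s+1},[\tilde{v}_t,\tilde{v}_{t'}])
= B([\tilde{v}_{s+1},\tilde{v}_t],\tilde{v}_{t'})
= B(\tilde{F}_{t,s+1},\tilde{v}_{t'})
= B([\tilde{E}_{t,s},\tilde{F}_{s,s+1}],\tilde{v}_{t'})=0$,
the last equality because all three vectors lie in the smaller ideal, already known to be $\mathcal{I}$-null. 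Without this (or an equivalent) supplementary argument your induction cannot close for $B_n$; with it, and with the height ordering above, your outline becomes essentially the paper's proof.
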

\begin{proof}
It is enough to consider the case of a simple Lie algebra, hence of one of the 4 classical types
plus the 5 exceptional ones.
\par \underline{Case $A_n$}.
Denote $E_{i,j}, 1\leqslant i,j \leqslant n+1$ the canonical basis of
$\frak{gl}(n+1,\Cmath).$
One may suppose that the Borel subalgebra of
$A_{n}= \frak{sl}(n+1)$
is comprised of the upper triangular matrices with zero trace, and the Cartan subalgebra $\frak{h}$
is $\bigoplus_{i=1}^{i=n}\; \Cmath H_i$
with
$H_i=E_{i,i}-E_{i+1,i+1}.$
The nilradical is
$\frak{g}=A_n^+=\bigoplus_{1\leqslant i < j \leqslant n+1}\; \Cmath {E}_{i,j}.$
For $n=1$,
 $\mathfrak{g}=\Cmath$ is
$\mathcal{I}$-null.
Suppose the result holds for the nilradical of
the Borel subalgebra of
$A_{n-1}= \frak{sl}(n).$ One has
 $\mathfrak{g}
 =  \Cmath E_{1,2} \oplus  \cdots  \oplus \Cmath E_{1,n+1} \oplus  \mathfrak{g}^{\prime}_2$
 with
 $\mathfrak{g}^{\prime}_2 =\bigoplus_{2\leqslant i < j \leqslant n+1}\; \Cmath {E}_{i,j}$
being the nilradical of
the Borel subalgebra of
$A_{n-1},$
hence $\mathcal{I}$-null.
 $E_{1,n+1}$ commutes with
 $\mathfrak{g}^{\prime}_2,$
 hence
 $ \mathfrak{g}^{\prime}_2$ is a codimension 1 ideal of
 $\Cmath E_{1,n+1} \oplus  \mathfrak{g}^{\prime}_2$,
 and,  from Corollary \ref{thecorr},
 $\Cmath E_{1,n+1} \oplus  \mathfrak{g}^{\prime}_2$ is $\mathcal{I}$-null.
Now $E_{1,n}$ commutes
with  all members of the basis of
$\Cmath E_{1,n+1} \oplus  \mathfrak{g}^{\prime}_2,$
except for $E_{n,n+1},$ and
$[E_{1,n},E_{n,n+1}]=E_{1,n+1}.$
Then
 $\Cmath E_{1,n+1} \oplus  \mathfrak{g}^{\prime}_2$
 is a codimension 1  ideal of
 $\Cmath E_{1,n} \oplus
 ( \Cmath E_{1,n+1} \oplus  \mathfrak{g}^{\prime}_2),$ and
 from Corollary \ref{thecorr},
 $\Cmath E_{1,n} \oplus
 ( \Cmath E_{1,n+1} \oplus  \mathfrak{g}^{\prime}_2)$
is $\mathcal{I}$-null.
Consider
 $\Cmath E_{1,n-1}\oplus (\Cmath E_{1,n}\oplus  \Cmath E_{1,n+1} \oplus  \mathfrak{g}^{\prime}_2).$
 $\Cmath E_{1,n-1}$ commutes
with  all members of the basis of
 $ \Cmath E_{1,n}\oplus  \Cmath E_{1,n+1} \oplus  \mathfrak{g}^{\prime}_2$
except for
$E_{n-1,n},E_{n-1,n+1},$
and yields respectively
$E_{1,n},E_{1,n+1}.$
 Then
 $ \Cmath E_{1,n}\oplus  \Cmath E_{1,n+1} \oplus  \mathfrak{g}^{\prime}_2$
 is a codimension 1  ideal of
 $\Cmath E_{1,n-1}\oplus (\Cmath E_{1,n}\oplus  \Cmath E_{1,n+1} \oplus  \mathfrak{g}^{\prime}_2),$
 and
since
$E_{n-1,n},E_{n-1,n+1}$
commute, we get
 from Corollary \ref{thecorr}
 that $\Cmath E_{1,n-1}\oplus (\Cmath E_{1,n}\oplus  \Cmath E_{1,n+1} \oplus  \mathfrak{g}^{\prime}_2)$
is $\mathcal{I}$-null.
The result then follows by induction.
\par \underline{Case $D_n$}.
We may take $D_n$ as the Lie algebra of matrices
\begin{equation}
\label{Dn}
\begin{pmatrix}
Z_1&Z_2\\Z_3&-{}^tZ_1
\end{pmatrix}
\end{equation}
with $Z_i  \in \frak{gl}(n,\Cmath), Z_2,Z_3$ skew symmetric (see \cite{helgason}, p. 193).
Denote $\tilde{E}_{i,j}=
\left(\begin{smallmatrix}
E_{i,j}&0\\0&-E_{j,i}
\end{smallmatrix}  \right),$
$\tilde{F}_{i,j}=
\left(\begin{smallmatrix}
0&E_{i,j}-E_{j,i}\\0&0
\end{smallmatrix}  \right)$
( $E_{i,j}, 1\leqslant i,j \leqslant n$ the canonical basis of
$\frak{gl}(n,\Cmath)$).
The Cartan subalgebra $\frak{h}$
is $\bigoplus_{i=1}^{i=n}\; \Cmath H_i$
with
$H_i=\tilde{E}_{i,i}$ and the nilradical
of the Borel subalgebra
is
\begin{equation}
\label{0Dn+}
D_n^+=\bigoplus_{1\leqslant i < j \leqslant n}\; \Cmath \tilde{E}_{i,j}
\oplus  \bigoplus_{1\leqslant i < j \leqslant n}\; \Cmath \tilde{F}_{i,j}.
\end{equation}
All $\tilde{F}_{i,j}$'s commute to one another, and one has:
\begin{equation}
\label{commFtilde}
[\tilde{E}_{i,j},\tilde{F}_{k,l}] = \delta_{j,k} \tilde{F}_{i,l} - \delta_{j,l}\tilde{F}_{i,k}.
\end{equation}
We identify $D_{n-1} $ to a subalgebra of $D_n$ by simply taking the first row and
first column of each block to be zero in (\ref{Dn}).
For $n=2$,
$D_2^+=\Cmath^2$ is $\mathcal{I}$-null.
Suppose the result holds true for
$D_{n-1}^+.$ One has
\begin{equation*}
D_n^+=
\Cmath \tilde{E}_{1,2} \oplus
\Cmath \tilde{E}_{1,3} \oplus  \cdots
\Cmath \tilde{E}_{1,n} \oplus
\Cmath \tilde{F}_{1,n} \oplus  \cdots
\oplus \Cmath \tilde{F}_{1,2} \oplus  D_{n-1}^+.
\end{equation*}
Start with $\Cmath \tilde{F}_{1,2} \oplus  D_{n-1}^+.$
From
(\ref{commFtilde}),
$\tilde{F}_{1,2}$ commutes with
all
$\tilde{E}_{i,j}\,( 2 \leqslant i <j \leqslant n$) hence with  $D_{n-1}^+.$
Then
$D_{n-1}^+$  is  a codimension 1 ideal of
$\Cmath \tilde{F}_{1,2} \oplus  D_{n-1}^+$
and
$\Cmath \tilde{F}_{1,2} \oplus  D_{n-1}^+$
is $\mathcal{I}$-null from  Corollary  \ref{thecorr}.
Consider now  $\Cmath \tilde{F}_{1,3} \oplus (\Cmath \tilde{F}_{1,2} \oplus  D_{n-1}^+).$
Again from
(\ref{commFtilde}),
$\tilde{F}_{1,3}$ commutes with
all elements of the basis of
$D_{n-1}^+$ except   $\tilde{E}_{2,3}$ and
$[\tilde{E}_{2,3}, \tilde{F}_{1,3}] = \tilde{F}_{1,2}.$
Then
$\Cmath \tilde{F}_{1,2} \oplus  D_{n-1}^+$
is  a codimension 1 ideal of
$\Cmath \tilde{F}_{1,3} \oplus (\Cmath \tilde{F}_{1,2} \oplus  D_{n-1}^+),$
and  the latter is $\mathcal{I}$-null.
Suppose that
$\Cmath \tilde{F}_{1,s-1} \oplus \cdots  \oplus \Cmath \tilde{F}_{1,2} \oplus  D_{n-1}^+$
is a
codimension 1 ideal of
$\Cmath \tilde{F}_{1,s} \oplus (\Cmath \tilde{F}_{1,s-1} \oplus \cdots  \oplus \Cmath \tilde{F}_{1,2} \oplus  D_{n-1}^+),$
 and that the latter is $\mathcal{I}$-null.
Consider
$\Cmath \tilde{F}_{1,s+1} \oplus (\Cmath \tilde{F}_{1,s} \oplus \cdots  \oplus \Cmath \tilde{F}_{1,2} \oplus  D_{n-1}^+).$
From
(\ref{commFtilde}),
for $2 \leqslant i < j \leqslant n,$
$[\tilde{E}_{i,j},\tilde{F}_{1,s+1}]= \delta_{j,s+1} \tilde{F}_{1,i}$
is nonzero only for $i=2, \cdots, s,$ and $j=s+1,$ and it is then equal to
$\tilde{F}_{1,i}.$
Then first
$\Cmath \tilde{F}_{1,s} \oplus (\Cmath \tilde{F}_{1,s-1} \oplus \cdots  \oplus \Cmath \tilde{F}_{1,2} \oplus  D_{n-1}^+)$
is a codimension 1 ideal of
$\Cmath \tilde{F}_{1,s+1} \oplus (\Cmath \tilde{F}_{1,s} \oplus \cdots  \oplus \Cmath \tilde{F}_{1,2} \oplus  D_{n-1}^+).$
Second,
 the latter
is $\mathcal{I}$-null from
Corollary \ref{thecorr}.
By induction the above property holds for $s=n.$
Consider now
$\Cmath \tilde{E}_{1,n} \oplus (
\Cmath \tilde{F}_{1,n} \oplus  \cdots
\oplus \Cmath \tilde{F}_{1,2} \oplus  D_{n-1}^+).$
One has for
 $2 \leqslant i <j \leqslant n,$
$[\tilde{E}_{1,n},\tilde{E}_{i,j}]=0,$
$[\tilde{E}_{1,n},\tilde{F}_{i,j}]=   -\delta_{n,j} \tilde{F}_{1,i},$
$[\tilde{E}_{1,n},\tilde{F}_{1,j}]= 0.$
Hence
$\Cmath \tilde{F}_{1,n} \oplus  \cdots
\oplus \Cmath \tilde{F}_{1,2} \oplus  D_{n-1}^+$ is an ideal of
$\Cmath \tilde{E}_{1,n} \oplus (
\Cmath \tilde{F}_{1,n} \oplus  \cdots
\oplus \Cmath \tilde{F}_{1,2} \oplus  D_{n-1}^+)$  and the latter is
$\mathcal{I}$-null.
For $2 \leqslant i <j \leqslant n,$
$1 \leqslant k \leqslant n-2,$
\begin{eqnarray*}
[\tilde{E}_{1,n-k},\tilde{E}_{i,j}]&=& \delta_{n-k,i} \tilde{E}_{1,j}, \cr
[\tilde{E}_{1,n-k},\tilde{F}_{i,j}]&=& \delta_{n-k,i} \tilde{F}_{1,j} - \delta_{n-k,j} \tilde{F}_{1,i},\cr
 [  {\tilde{E}}_{1,n-k} , \tilde{E}_{1,n} ]& =&   \delta_{n-k,1} \tilde{E}_{1,n}=0,\cr
[\tilde{E}_{1,n-k},\tilde{F}_{1,j}]&=& \delta_{n-k,1} \tilde{F}_{1,j}=0.
\end{eqnarray*}
$[\tilde{E}_{1,n-1},\tilde{E}_{i,j}]$
is nonzero only for ($i=n-1,j=n$) and then yields
$\tilde{E}_{1,n};$
$[\tilde{E}_{1,n-1},\tilde{F}_{i,j}]$
is nonzero only for ($i=n-1,j=n$)
or for ($i < j=n-1$)
and yields respectively
$ \tilde{F}_{1,n},$ or
$- \tilde{F}_{1,i}.$
 $[  {\tilde{E}}_{1,n-1} , \tilde{E}_{1,n} ]$ and
$[\tilde{E}_{1,n-1},\tilde{F}_{1,j}]$ are zero for $n\geqslant 3.$
Hence, first
$\Cmath \tilde{E}_{1,n} \oplus
\Cmath \tilde{F}_{1,n} \oplus  \cdots
\oplus \Cmath \tilde{F}_{1,2} \oplus  D_{n-1}^+$
is a codimension 1 ideal of
$\Cmath \tilde{E}_{1,n-1} \oplus (
\Cmath \tilde{E}_{1,n} \oplus
\Cmath \tilde{F}_{1,n} \oplus  \cdots
\oplus \Cmath \tilde{F}_{1,2} \oplus  D_{n-1}^+),$
and second  the latter is $\mathcal{I}$-null,
since
$ \tilde{E}_{n-1,n}$
commutes with
$ \tilde{F}_{n-1,n},$
$ \tilde{F}_{i,n-1}.$
Suppose that
$\Cmath \tilde{E}_{1,n-k+1} \oplus \cdots
\oplus \Cmath \tilde{E}_{1,n}
\oplus \Cmath \tilde{F}_{1,n} \oplus  \cdots
\oplus \Cmath \tilde{F}_{1,2} \oplus  D_{n-1}^+$
is a
codimension 1 ideal of
$\Cmath \tilde{E}_{1,n-k} \oplus (
\Cmath \tilde{E}_{1,n-k+1} \oplus \cdots
\oplus \Cmath \tilde{E}_{1,n}
\oplus \Cmath \tilde{F}_{1,n} \oplus  \cdots
\oplus \Cmath \tilde{F}_{1,2} \oplus  D_{n-1}^+)$
and that the latter is
 $\mathcal{I}$-null.
Consider
$\Cmath \tilde{E}_{1,n-k-1} \oplus (
\Cmath \tilde{E}_{1,n-k} \oplus \cdots
\oplus \Cmath \tilde{E}_{1,n}
\oplus \Cmath \tilde{F}_{1,n} \oplus  \cdots
\oplus \Cmath \tilde{F}_{1,2} \oplus  D_{n-1}^+).$
$[\tilde{E}_{1,n-k-1},\tilde{E}_{i,j}]$
is nonzero only for $i=n-k-1$ and yields then
$\tilde{E}_{1,j};$
$[\tilde{E}_{1,n-k-1},\tilde{F}_{i,j}]= \delta_{n-k-1,i} \tilde{F}_{1,j} - \delta_{n-k-1,j} \tilde{F}_{1,i}$
is nonzero only for $i=n-k-1$ or $j=n-k-1$ and yields resp.
$\tilde{F}_{1,j}$ or
$-\tilde{F}_{1,i}.$
Hence
$\Cmath \tilde{E}_{1,n-k} \oplus \cdots
\oplus \Cmath \tilde{E}_{1,n}
\oplus \Cmath \tilde{F}_{1,n} \oplus  \cdots
\oplus \Cmath \tilde{F}_{1,2} \oplus  D_{n-1}^+$
is an ideal of
$\Cmath \tilde{E}_{1,n-k-1} \oplus (
\Cmath \tilde{E}_{1,n-k} \oplus \cdots
\oplus \Cmath \tilde{E}_{1,n}
\oplus \Cmath \tilde{F}_{1,n} \oplus  \cdots
\oplus \Cmath \tilde{F}_{1,2} \oplus  D_{n-1}^+).$
The latter
 is $\mathcal{I}$-null since
$\tilde{E}_{n-k-1,j}$ commutes with both
$\tilde{F}_{n-k-1,j^{\prime}},$
$\tilde{F}_{i,n-k-1}$ ($j^{\prime} \geqslant n-k$).
The result follows by induction.
\par \underline{Case $B_n$}.
We may take $B_n$ ($n\geqslant 2$) as the Lie algebra of matrices
\begin{equation}
\left(
\begin{array}{c|c|c}
0& u&v\\
\hline
-{}^tv&Z_1&Z_2\\
\hline
-{}^tu&Z_3&-{}^tZ_1
\end{array}
\right)
\end{equation}
with $u,v$ complex $(1\times n)$-matrices,
$Z_i  \in \frak{gl}(n,\Cmath), Z_2,Z_3$ skew symmetric, i.e.
\begin{equation}
\left(
\begin{array}{c|ccc}
0& u&&v\\
\hline
-{}^tv&& &\\
&&\boxed{A} &\\
-{}^tu&&
\end{array}
\right)
\end{equation}
with $A \in D_n.$
We identify $A \in D_n$ to the matrix
\begin{equation*}
\left(
\begin{array}{c|ccc}
0& 0&&0\\
\hline
0&& &\\
&&\boxed{A} &\\
0&&
\end{array}
\right)       \in B_n.
\end{equation*}
The Cartan subalgebra of $B_n$ is then simply
that of $D_n.$
$B_n^+$  consists of the matrices
\begin{equation}
\left(
\begin{array}{c|ccc}
0& 0&&v\\
\hline
-{}^tv&& &\\
&&\boxed{A} &\\
0&&
\end{array}
\right)
\end{equation}
with $v$ complex $(1\times n)$-matrix and $A \in D_n^+.$
For $1 \leqslant q \leqslant n, $ let
${v_q}$ the $(1\times n)$-matrix $(0, \cdots,1,\cdots,0)$ ($1$ in $q^{\text{th}}$ position), and
\begin{equation*}
\tilde{v}_q  = \left(
\begin{array}{c|ccc}
0& 0&&v_q\\
\hline
-{}^tv_q&& &\\
&&\boxed{0} &\\
0&&
\end{array}
\right)
\end{equation*}
Hence $B_n^+ = \left( \bigoplus_{q=1}^{n}
\, \Cmath \tilde{v}_q \right) \oplus D_n^+.$
One has for
$1 \leqslant q \leqslant n, \; 1 \leqslant i<j \leqslant n $
\begin{eqnarray*}
[\tilde{v}_q ,\tilde{E}_{i,j}]&=&   -\delta_{q,j} \tilde{v}_i
\cr
[\tilde{v}_q ,\tilde{F}_{i,j}]&=&   0
\end{eqnarray*}
and for $1 \leqslant s<q \leqslant n$
\begin{equation}
\label{vt}
[\tilde{v}_q ,\tilde{v}_{s}]=   \tilde{F}_{s,q} .
\end{equation}
Consider
$\Cmath \tilde{v}_1 \oplus D_n^+.$
As
$\tilde{v}_1$ commutes with
$\tilde{E}_{i,j}$ and $\tilde{F}_{i,j},$
$D_n^+$ is an ideal of
$\Cmath \tilde{v}_1 \oplus D_n^+$
and the latter is  $\mathcal{I}$-null.
 Suppose that
$\Cmath \tilde{v}_{s-1} \oplus \cdots \oplus \Cmath \tilde{v}_1 \oplus D_n^+$
is an ideal of
\linebreak[4]
$\Cmath \tilde{v}_{s} \oplus \left( \Cmath \tilde{v}_{s-1} \oplus \cdots \oplus \Cmath \tilde{v}_1 \oplus D_n^+\right)$
and the latter is  $\mathcal{I}$-null.
Consider
\linebreak[4]
$\Cmath \tilde{v}_{s+1} \oplus \left( \Cmath \tilde{v}_{s} \oplus  \Cmath \tilde{v}_{s-1} \oplus \cdots \oplus \Cmath \tilde{v}_1 \oplus D_n^+\right).$
$[\tilde{v}_{s+1} ,\tilde{E}_{i,j}]=   -\delta_{s+1,j} \tilde{v}_i$ hence
$\tilde{v}_{s+1}$  commutes to all $\tilde{E}_{i,j}$'s except for
$\tilde{E}_{i,s+1}$ ($i\leqslant s$) and then yields
$-\tilde{v}_i.$
For $t \leqslant s,$
$[\tilde{v}_{s+1} ,\tilde{v}_{t}]=   \tilde{F}_{t,s+1} .$
Hence $\Cmath \tilde{v}_{s} \oplus  \Cmath \tilde{v}_{s-1} \oplus \cdots \oplus \Cmath \tilde{v}_1 \oplus D_n^+$
is an ideal of
$\Cmath \tilde{v}_{s+1} \oplus \left( \Cmath \tilde{v}_{s} \oplus  \Cmath \tilde{v}_{s-1} \oplus \cdots \oplus \Cmath \tilde{v}_1 \oplus D_n^+\right).$
Now we cannot apply directly
 Corollary \ref{thecorr} to conclude
that the latter is  $\mathcal{I}$-null
as the family
$\mathcal{F}=\{ \tilde{E}_{i,s+1}, \tilde{v}_t ;
1\leqslant i \leqslant s,
1\leqslant t \leqslant s
\}$
is not commutative.
The $\tilde{E}_{i,s+1}$'s ($i\leqslant s$) commute to one another and
to the
$\tilde{v}_t$'s, but
the $\tilde{v}_t$'s do not commute to one another.
However, recall from the proof of
 Corollary \ref{thecorr}
 that one has to check that, for any invariant bilinear form $B$ on
$\Cmath \tilde{v}_{s+1} \oplus \left( \Cmath \tilde{v}_{s} \oplus  \Cmath \tilde{v}_{s-1} \oplus \cdots \oplus \Cmath \tilde{v}_1 \oplus D_n^+\right)$,
$B(\tilde{v}_{s+1}, [X,Y])=0$
for all $X,Y \in \mathcal{F}.$
That reduces to
$B(\tilde{v}_{s+1},
[\tilde{v}_{t},\tilde{v}_{t'}])=0$ $\forall t,t', 1\leqslant t <t'\leqslant s.$
Now,
$B(\tilde{v}_{s+1},  [\tilde{v}_{t},\tilde{v}_{t'}])
=
B([\tilde{v}_{s+1},  \tilde{v}_{t}],\tilde{v}_{t'})
= B(\tilde{F}_{t,s+1},\tilde{v}_{t'})
= B([\tilde{E}_{t,s},\tilde{F}_{s,s+1}],\tilde{v}_{t'})
=0$
since
$\tilde{E}_{t,s},\tilde{F}_{s,s+1},\tilde{v}_{t'} \in
\Cmath \tilde{v}_{s} \oplus  \Cmath \tilde{v}_{s-1} \oplus \cdots \oplus \Cmath \tilde{v}_1 \oplus D_n^+$
which is  $\mathcal{I}$-null.
We conclude that $\Cmath \tilde{v}_{s+1} \oplus \left( \Cmath \tilde{v}_{s} \oplus  \Cmath \tilde{v}_{s-1} \oplus \cdots \oplus \Cmath \tilde{v}_1 \oplus D_n^+\right)$
is  $\mathcal{I}$-null.
By induction the property holds for $s=n$ and $B_n^+$
is  $\mathcal{I}$-null.
\par \underline{Case $C_n$}.
This case is pretty similar to the case $D_n.$
We may take $C_n$ as the Lie algebra of matrices
\begin{equation}
\label{Cn}
\begin{pmatrix}
Z_1&Z_2\\Z_3&-{}^tZ_1
\end{pmatrix}
\end{equation}
with $Z_i  \in \frak{gl}(n,\Cmath), Z_2,Z_3$ symmetric.
 $\tilde{E}_{i,j}$ and the Cartan subalgebra are identical to those of $D_n.$
We denote for $1 \leqslant i,j \leqslant n:$
$\hat{F}_{i,j}=  \left(\begin{smallmatrix}
0&E_{i,j}+E_{j,i}\\0&0
\end{smallmatrix}  \right).$ Then
\begin{equation}
\label{Cn+}
C_n^+= \bigoplus_{1\leqslant i < j \leqslant n}\; \Cmath \tilde{E}_{i,j}
\oplus   \bigoplus_{1\leqslant i \leqslant j \leqslant n}\; \Cmath \hat{F}_{i,j}.
\end{equation}
All $\hat{F}_{k,l}$'s commute to one another, and one has:
\begin{equation}
\label{hatF}
[\tilde{E}_{i,j},\hat{F}_{k,l}] = \delta_{j,k} \hat{F}_{i,l} + \delta_{j,l}\hat{F}_{i,k}.
\end{equation}
The case is step by step analogous to the case of $D_n$ with
(\ref{hatF}) instead of (\ref{commFtilde})
and (\ref{Cn+}) instead of (\ref{0Dn+}).
\par \underline{Case $G_2$}.
The commutation relations for $G_2$  appear in \cite{fulton-harris}, p. 346.
$G_2^+$ is 6-dimensional with commutation relations
$[x_1,x_2]=x_3; \,
[x_1,x_3]=2x_4;   \,
[x_1,x_4]=-3x_5;   \,
[x_2,x_5]=-x_6;   \,
[x_3,x_4]=-3x_6.$
$G_2^+$ has the same adjoint cohomology
\linebreak[4]
$(1,4,7,8,7,5,2)$ as,
and is  isomorphic to,
$\mathfrak{g}_{6,18},$ which is
 $\mathcal{I}$-null.
\par \underline{Case $F_4$}.
$F_4^+$ has  24 positive roots,
and root vectors $x_i$ ($1 \leqslant i \leqslant 24$).
From the root pattern, one gets with some calculations the
commutation relations of $F_4^+:$
$ [x_1,x_2]=x_{5};$
$ [x_1,x_{13}]=x_{14};$
$ [x_1,x_{15}]= - x_{6};$
$ [x_1,x_{16}]= - x_{7};$
$ [x_1,x_{17}]= - x_{23};$
$ [x_1,x_{18}]=x_{19};$
$ [x_1,x_{24}]=x_{22};$
$ [x_2,x_3]=x_{15};$
$ [x_2,x_7]=x_{8};$
$ [x_2,x_{12}]=x_{13};$
$ [x_2,x_{19}]=x_{20};$
$ [x_2,x_{21}]=x_{24};$
$ [x_2,x_{23}]=x_{9};$
$ [x_3,x_4]=x_{21};$
$ [x_3,x_5]=x_{6};$
$ [x_3,x_6]=x_{7};$
$ [x_3,x_9]=x_{10};$
$ [x_3,x_{11}]=x_{12};$
$ [x_3,x_{15}]=x_{16};$
$ [x_3,x_{20}]= - 2 x_{11};$
$ [x_3,x_{22}]= \frac{1}{2}  x_{23};$
$ [x_3,x_{24}]= -\frac{1}{2} x_{17};$
$ [x_4,x_6]=x_{22};$
$ [x_4,x_7]=x_{23};$
$ [x_4,x_8]=x_{9};$
$ [x_4,x_9]= - x_{20};$
$ [x_4,x_{10}]=x_{11};$
$ [x_4,x_{15}]= - x_{24};$
$ [x_4,x_{16}]=x_{17};$
$ [x_4,x_{17}]=x_{18};$
$ [x_4,x_{23}]= - x_{19};$
$ [x_5,x_{12}]=x_{14};$
$ [x_5,x_{16}]=x_{8};$
$ [x_5,x_{17}]=x_{9};$
$ [x_5,x_{18}]= - x_{20};$
$ [x_5,x_{21}]=x_{22};$
$ [x_6,x_{11}]= - x_{14};$
$ [x_6,x_{15}]= - x_{8};$
$ [x_6,x_{17}]=x_{10};$
$ [x_6,x_{18}]=2 x_{11};$
$ [x_6,x_{21}]= \frac{1}{2}  x_{23};$
$ [x_6,x_{24}]= \frac{1}{2} x_{9};$
$ [x_7,x_{18}]=2 x_{12};$
$ [x_7,x_{20}]= - 2 x_{14};$
$ [x_7,x_{24}]=x_{10};$
$ [x_8,x_{18}]=2 x_{13};$
$ [x_8,x_{19}]=2 x_{14};$
$ [x_8,x_{21}]= - x_{10};$
$ [x_9,x_{17}]= - 2 x_{13};$
$ [x_9,x_{21}]= - x_{11};$
$ [x_9,x_{23}]=2 x_{14};$
$ [x_{10},x_{21}]= - x_{12};$
$ [x_{10},x_{22}]= - x_{14};$
$ [x_{10},x_{24}]= - x_{13};$
$ [x_{11},x_{15}]= - x_{13};$
$ [x_{15},x_{19}]=2 x_{11};$
$ [x_{15},x_{21}]= \frac{1}{2} x_{17};$
$ [x_{15},x_{22}]= \frac{1}{2}  x_{9};$
$ [x_{15},x_{23}]= - x_{10};$
$ [x_{16},x_{19}]=2 x_{12};$
$ [x_{16},x_{20}]=2 x_{13};$
$ [x_{16},x_{22}]=x_{10};$
$ [x_{17},x_{22}]=x_{11};$
$ [x_{17},x_{23}]=2 x_{12};$
$ [x_{21},x_{22}]=  \frac{1}{2}  x_{19};$
$ [x_{21},x_{24}]=  \frac{1}{2}  x_{18};$
$ [x_{22},x_{24}]=   - \frac{1}{2}   x_{20};$
$ [x_{23},x_{24}]=x_{11}.$

Then the
computation of all invariant bilinear forms on $F_4^+$ with
the computer algebra system \textit{Reduce}
yields the conclusion
that $F_4^+$
 is $\mathcal{I}$-null.
\par \underline{Case $E_6$}.
In the case of $E_6^+$ the set $\Delta_+$ of positive roots
(associated to
the set $S$ of simple roots)
has cardinality
36  (\cite{fulton-harris}, p. 333):
\begin{multline*}
\Delta_+ = \{\varepsilon_i+\varepsilon_j;
1\leqslant i <j \leqslant5\} \cup
\{\varepsilon_i-\varepsilon_j; 1\leqslant j <i \leqslant5\}
\\
\cup
\{\frac{1}{2}(
\pm\varepsilon_1\pm\varepsilon_2
\pm\varepsilon_3\pm\varepsilon_4
\pm\varepsilon_5+\sqrt{3}\, \varepsilon_6);
\text{\# minus signs even}
\}
\end{multline*}
(the $(\varepsilon_j)$'s an orthogonal basis of the Euclidean space).
Instead of computing the commutation relations,
we will use the following
property    ($\mathcal{P}$) of $\Delta_+.$
\begin{equation*}
 (\mathcal{P}):
\text{ for } \alpha, \beta, \gamma \in \Delta_+ \, ,
\text{ if }
\alpha+\beta \in \Delta_+
\text{ and } \alpha+\gamma \in \Delta_+ \,  ,
\text{ then }
\beta+\gamma \not\in \Delta_+ .
\end{equation*}
Introduce some Chevalley basis (\cite{helgason}, p. 19 ex. 7)
of
$E_6^+:$
$(X_\alpha)_{\alpha \in \Delta^+}.$
One has
$$[X_\alpha, X_\beta] =N_{\alpha,\beta} X_{\alpha+\beta}  \; \forall \alpha, \beta \in
\Delta_+$$
$$N_{\alpha,\beta}=0 \text{ if } \alpha+\beta \not \in \Delta_+,
N_{\alpha,\beta}\in \Zmath \setminus \{0\}  \text{ if } \alpha+\beta  \in \Delta_+.$$
Define inductively a sequence
$\frak{g}_1 \subset
\frak{g}_2 \subset \cdots
\subset
\frak{g}_{36} = E_6^+$
of
 $\mathcal{I}$-null subalgebras, each of which a codimension 1 ideal of the following,
 as follows.
 Start with
$\frak{g}_1 = \Cmath X_{\delta_1},$
$\delta_1 \in \Delta_+$ of maximum height.
Suppose
$\frak{g}_i$ defined. Then take
$\frak{g}_{i+1} = \Cmath X_{\delta_{i+1}} \oplus \frak{g}_i$
with $\delta_{i+1} \in \Delta_+ \setminus \{\delta_1, \cdots ,\delta_i\}$
of maximum height.
Clearly,
$\frak{g}_i$ is a codimension 1 ideal of $\frak{g}_{i+1}.$
To prove that it is
 $\mathcal{I}$-null
 we only have to check that, for $1 \leqslant s,t \leqslant i,$
 if
 $\delta_{i+1}+\delta_s \in \Delta_+$
 and
 $\delta_{i+1}+\delta_t \in \Delta_+$
 then $\delta_s +\delta_t \not \in \Delta_+.$
 That holds true because of property  ($\mathcal{P}$).
\par \underline{Case $E_7$}.
In the case of $E_7^+$ the set $\Delta_+$ of positive roots
has cardinality
63  (\cite{fulton-harris}, p. 333):
\begin{multline*}
\Delta_+ = \{\varepsilon_i+\varepsilon_j;
1\leqslant i <j \leqslant6\} \cup
\{\varepsilon_i-\varepsilon_j; 1\leqslant j <i \leqslant6\}
\cup
\{\sqrt{2}\varepsilon_7
\}
\\
\cup
\{\frac{1}{2}(
\pm\varepsilon_1\pm\varepsilon_2
\pm\varepsilon_3\pm\varepsilon_4
\pm\varepsilon_5
\pm\varepsilon_6
+\sqrt{2}\, \varepsilon_7);
\text{\# minus signs odd}
\}.
\end{multline*}
 Property ($\mathcal{P}$) holds true for $E_7^+$
(see \cite{companionarchive}).
 Hence the conclusion follows as in the case of $E_6^+.$
\par \underline{Case $E_8$}.
In the case of $E_8^+$ the set $\Delta_+$ of positive roots
has cardinality
120 (\cite{fulton-harris}, p. 333):
\begin{multline*}
\Delta_+ =  \{\varepsilon_i+\varepsilon_j;
1\leqslant i <j \leqslant8\} \cup
\{\varepsilon_i-\varepsilon_j; 1\leqslant j <i \leqslant8\}
\\
\cup
\{\frac{1}{2}(
\pm\varepsilon_1\pm\varepsilon_2
\pm\varepsilon_3\pm\varepsilon_4
\pm\varepsilon_5
\pm\varepsilon_6
\pm\varepsilon_7
+ \varepsilon_8);
\text{\# minus signs even}
\}.
\end{multline*}
 Property ($\mathcal{P}$) holds true for $E_8^+$
(see \cite{companionarchive}).
 Hence the conclusion follows as in the case of $E_6^+.$
\end{proof}
\begin{remark}
 Property ($\mathcal{P}$) holds  for $A_n^+,$ hence we could have used it.
 However, it does not hold  for $F_4^+.$
 One has for example in the above commutation relations of $F_4^+$
 (with root vectors)
 $[x_3,x_4] \neq 0, [x_3,x_9]\neq 0, $ yet $[x_4,x_9] \neq 0.$
\end{remark}

\begin{remark}
In the \textit{transversal to dimension} approach to the classification problem of nilpotent
Lie algebras initiated in \cite{santha1}, one first associates a generalized Cartan matrix
(abbr. GCM) $A$ to any nilpotent finite dimensional complex Lie algebra $\frak{g},$
and then looks at $\frak{g}$ as the quotient $\hat{\frak{g}}(A)_+/\frak{I}$
of the nilradical
of the Borel subalgebra
of the Kac-Moody Lie algebra
$\hat{\frak{g}}(A)$ associated to $A$ by some ideal $\frak{I}.$
Then one gets for any GCM $A$ the subproblem of classifying
(up to the action of a certain group)
all ideals of
$\hat{\frak{g}}(A)_+,$
thus getting all nilpotent Lie algebras of type $A$
(see
\cite{favre-santha},
\cite{fernandez},
\cite{fernandez-nunez},
\cite{santha4}, and the references therein).
Any indecomposable GCM is of exactly one of the 3 types \textit{finite}, \textit{affine},
\textit{indefinite} (among that last  the hyperbolic GCMs, with the property that
any connected proper subdiagram of the Dynkin diagram is of finite or affine type)
(\cite{carbone},\cite{kac},\cite{wanze}).
From Theorem \ref{corollarynilradical},
the nilpotent Lie algebras that are not $\mathcal{I}$-null
all come from affine or indefinite types.
Unfortunately, that is the case of many nilpotent Lie algebras,
see Table 2.
Finally, let us add some indications on how Table 2 was computed.
The commutation relations for the nilpotent Lie algebras
$\mathfrak{g}$
in Table 2
are given in \cite{ART}, \cite{Magnin1}
in terms of a basis
$(x_j)_{1 \leqslant j \leqslant n}$ ($n =\dim{\mathfrak{g}}$)
which diagonalizes a maximal torus $T$.
We may suppose here that
$(x_j)_{1 \leqslant j \leqslant \ell}$,
 $\ell =\dim{\left(   \mathfrak{g}/\mathcal{C}^2 \mathfrak{g} \right)}$,
 is a basis for
$\mathfrak{g}$
 modulo $\mathcal{C}^2 \mathfrak{g}.$
The associated weight pattern $R(T)$ and weight spaces decomposition
$\mathfrak{g} = \bigoplus_{\beta \in R(T)} \, \mathfrak{g}^\beta$
appear in \cite{Magnin1}. As in
\cite{santha1}, one first introduces
$R_1(T)=\{\beta \in R(T); \mathfrak{g}^{\beta} \not \subset \mathcal{C}^2 \mathfrak{g}\} =\{\beta_1, \dots ,\beta_s\},$
$\ell_a =
 \dim{\left(   \mathfrak{g}^{\beta_a}/
            \left( \mathfrak{g}^{\beta_a}\cap \mathcal{C}^2 \mathfrak{g}
                                       \right)
                                       \right) }$,
 $d_a =  \dim{  \mathfrak{g}^{\beta_a}}$
 ($1\leqslant a \leqslant s$).
By definition the GCM associated to
$\mathfrak{g}$ is
$A=(a^i_j)_{1 \leqslant i,j \leqslant \ell}$
with $a^i_i=2$
and, for $i\neq j,$ $-a^i_j$ defined as follows.
 In the simplest case where $d_a= 1 \, \forall a$
 ($1\leqslant a \leqslant s$),
 then,
for $i\neq j,$ $-a^i_j$ is the lowest  $k \in \Nmath$ such that
$ad(x_i)^{k+1}(x_j)=0.$
If
  $d_a > 1$ for some
 $1\leqslant a \leqslant s$
 (Lie algebras having that property are signalled by a ${ }^{\ddagger}$ in Table 2),
one has (if   $l_a > 1$  as well)
to reorder $x_1, \dots ,x_\ell$
according to weights as
$y_1, \dots ,y_\ell$  with $y_j$ of weight $\beta_{f(j)}$ ,
$f \, : \, \{1,\dots, s\} \rightarrow \{1,\dots, s\}$ some step function.
Then, for $i\neq j$,
$-a^i_j = \inf{\{k \in \Nmath;
ad(v)^{k+1}(w)=0 \,
\forall v \in \mathfrak{g}^{\beta_{f(i)}}
\, \forall w \in \mathfrak{g}^{\beta_{f(j)}}
\} }$.
The GCM $A$ is an invariant of
$\mathfrak{g},$ up to permutations of $\{\beta_1,\dots , \beta_s\}$
that leave the $d_{\beta}$'s invariant.
The type of the GCM was identified either directly or through
the associated Dynkin diagram.
As an example to Table 2, there are (up to isomorphism) three
$7$-dimensional nilpotent Lie algebras
that can be  constructed from the GCM $D^{(3)}_4$:
$\mathfrak{g}_{7,2.1(ii)},$
$\mathfrak{g}_{7,2.10},$
$\mathfrak{g}_{7,3.2}.$
The $7$-dimensional nilpotent Lie algebra
$D^{(3),0}_{4,42}$ constructed from the GCM $D^{(3)}_4$ in
\cite{fernandez} is isomorphic to
$\mathfrak{g}_{7,3.2}.$

\end{remark}

\begin{table}[h]%
\label{KMtable1}
\caption{Kac-Moody types for indecomposable nilpotent Lie algebras of dimension $\leq 7.$
Notations for indefinite hyperbolic are those of
\cite{wanze},
supplemented
in parentheses
for rank $3,4$
by the notations of \cite{carbone}
(as there are misprints and omissions in \cite{wanze}).
}
\begin{flushleft}
{\fontsize{8}{7.5} \selectfont
\begin{tabular}{||p{1.4cm}|p{2cm}|p{1.5cm}|p{1.5cm}|p{1.5cm}|p{1.5cm}||}\hline
\hline
\textbf{algebra}&
\textbf{GCM}&
Finite&
Affine&
Indefinite Hyperbolic&
Indefinite Not Hyperbolic
\\ \hline

$\mathfrak{g}_{3}$
&$\left( \begin{smallmatrix}
2&-1\\-1&2
\end{smallmatrix}\right)$
&         $A_2$             
&                      
&                     
&                      
\\

$\mathfrak{g}_{4}$
&$\left( \begin{smallmatrix}
2&-2\\-1&2
\end{smallmatrix}\right)$
&         $C_2$             
&                      
&                     
&                      
\\

$\mathfrak{g}_{5,1}$
&$\left( \begin{smallmatrix}
2&0&-1&0\\0&2&0&-1\\-1&0&2&0\\0&-1&0&2
\end{smallmatrix}\right)$
&         $A_2 \times A_2$             
&                      
&                     
&                      
\\

$\mathfrak{g}_{5,2}$
&$\left( \begin{smallmatrix}
2&-1&-1\\-1&2&0\\-1&0&2
\end{smallmatrix}\right)$
&         $A_3$             
&                      
&                     
&                      
\\

$\mathfrak{g}_{5,3}$
&$\left( \begin{smallmatrix}
2&-2&0\\-1&2&-1\\0&-1&2
\end{smallmatrix}\right)$
&         $B_3$             
&                      
&                     
&                      
\\

$\mathfrak{g}_{5,4}$
&$\left( \begin{smallmatrix}
2&-2\\-2&2
\end{smallmatrix}\right)$
&                      
&         $A_1^{(1)}$             
&                     
&                      
\\

$\mathfrak{g}_{5,5}$
&$\left( \begin{smallmatrix}
2&-3\\-1&2
\end{smallmatrix}\right)$
&         $G_2$             
&                      
&                     
&                      
\\

$\mathfrak{g}_{5,6}$
&$\left( \begin{smallmatrix}
2&-3\\-2&2
\end{smallmatrix}\right)$
&                      
&                      
&         $(3,2)$            
&                      
\\

$\mathfrak{g}_{6,1}$
&$\left( \begin{smallmatrix}
2&-1&0&-1\\-1&2&-1&0\\0&-1&2&0\\-1&0&0&2
\end{smallmatrix}\right)$
&         $A_4$             
&                      
&                     
&                      
\\

$\mathfrak{g}_{6,2}$
&$\left( \begin{smallmatrix}
2&-2&0&0\\-1&2&0&0\\0&0&2&-1\\0&0&-1&2
\end{smallmatrix}\right)$
&         $B_2 \times A_2$             
&                      
&                     
&                      
\\

$\mathfrak{g}_{6,3}$
&$\left( \begin{smallmatrix}
2&-1&-1\\-1&2&-1\\-1&-1&2
\end{smallmatrix}\right)$
&                     
&        $A_2^{(1)}$              
&                     
&                      
\\

$\mathfrak{g}_{6,4}$
&$\left( \begin{smallmatrix}
2&-1&-1\\-2&2&0\\-1&0&2
\end{smallmatrix}\right)$
&       $B_3$              
&                      
&                     
&                      
\\

$\mathfrak{g}_{6,5}$
${ }^{\ddagger}$
&$\left( \begin{smallmatrix}
2&-2&-1\\-2&2&-1\\-1&-1&2
\end{smallmatrix}\right)$
&                     
&                      
&       $H_2^{(3)}$ \; (32)             
&                      
\\

$\mathfrak{g}_{6,6}$
&$\left( \begin{smallmatrix}
2&-1&0\\-2&2&-1\\0&-1&2
\end{smallmatrix}\right)$
&       $C_3$              
&                      
&                     
&                      
\\

$\mathfrak{g}_{6,7}$
&$\left( \begin{smallmatrix}
2&-2&-1\\-1&2&-1\\-1&-1&2
\end{smallmatrix}\right)$
&                     
&                      
&       $H_1^{(3)}$  \; (1)            
&                      
\\

$\mathfrak{g}_{6,8}$
&$\left( \begin{smallmatrix}
2&-2&0\\-2&2&-1\\0&-1&2
\end{smallmatrix}\right)$
&                     
&                      
&       $H_{96}^{(3)}$ \, (103)             
&                      
\\

$\mathfrak{g}_{6,9}$
&$\left( \begin{smallmatrix}
2&-1&-1\\-1&2&0\\-1&0&2
\end{smallmatrix}\right)$
&       $A_3$              
&                      
&                     
&                      
\\

$\mathfrak{g}_{6,10}$
&$\left( \begin{smallmatrix}
2&-2&-1\\-1&2&0\\-2&0&2
\end{smallmatrix}\right)$
&                     
&       $A_4^{(2)}$               
&                     
&                      
\\

$\mathfrak{g}_{6,11}$
&$\left( \begin{smallmatrix}
2&-3&0\\-1&2&-1\\0&-1&2
\end{smallmatrix}\right)$
&                     
&       $G_2^{(1)}$               
&                     
&                      
\\
$\mathfrak{g}_{6,12}
{ }^{\ddagger}$
&$\left( \begin{smallmatrix}
2&-3&-2\\-2&2&-1\\-1&-1&2
\end{smallmatrix}\right)$
&                     
&                      
&                     
&      $\surd$                
\\

$\mathfrak{g}_{6,13}$
&$\left( \begin{smallmatrix}
2&-3&0\\-1&2&-1\\0&-1&2
\end{smallmatrix}\right)$
&                     
&       $G_2^{(1)}$               
&                     
&                      
\\

$\mathfrak{g}_{6,14}$
&$\left( \begin{smallmatrix}
2&-3\\-2&2
\end{smallmatrix}\right)$
&                     
&                      
&       $(3,2)$              
&                      
\\

$\mathfrak{g}_{6,15}$
&$\left( \begin{smallmatrix}
2&-2\\-2&2
\end{smallmatrix}\right)$
&                     
&       $A_1^{(1)}$               
&                     
&                      
\\

$\mathfrak{g}_{6,16}$
&$\left( \begin{smallmatrix}
2&-4\\-1&2
\end{smallmatrix}\right)$
&                     
&       $A_2^{(2)}$               
&                     
&                      
\\
$\mathfrak{g}_{6,17}$
&$\left( \begin{smallmatrix}
2&-4\\-2&2
\end{smallmatrix}\right)$
&                     
&                      
&       $(4,2)$              
&                      
\\
\hline
\hline

\end{tabular}
}
\end{flushleft}
\end{table}

\setcounter{table}{1}
\begin{table}[h]%
\caption{continued}
\begin{flushleft}
{\fontsize{8}{7.5} \selectfont
\begin{tabular}{||p{1.4cm}|p{2cm}|p{1.5cm}|p{1.5cm}|p{1.5cm}|p{1.5cm}||}\hline
\hline
\textbf{algebra}&
\textbf{GCM}&
Finite&
Affine&
Indefinite Hyperbolic&
Indefinite Not Hyperbolic
\\ \hline

$\mathfrak{g}_{6,18}$
&$\left( \begin{smallmatrix}
2&-3\\-1&2
\end{smallmatrix}\right)$
&         $G_2$             
&                      
&                     
&                      
\\

$\mathfrak{g}_{6,19}$
&$\left( \begin{smallmatrix}
2&-4\\-2&2
\end{smallmatrix}\right)$
&                     
&                      
&       (4,2)              
&                      
\\

$\mathfrak{g}_{6,20}$
&$\left( \begin{smallmatrix}
2&-3\\-3&2
\end{smallmatrix}\right)$
&                     
&                      
&      $(3,3)$               
&                     
\\

$\mathfrak{g}_{7,0.1}
{ }^{\ddagger}$
&$\left( \begin{smallmatrix}
2&-5\\-5&2
\end{smallmatrix}\right)$
&                      
&                      
& $(5,5)$                    
&                      
\\

$\mathfrak{g}_{7,0.2}
{ }^{\ddagger}$
&ditto
&                      
&                      
& ditto                    
&                      
\\

$\mathfrak{g}_{7,0.3}
{ }^{\ddagger}$
&ditto
&                      
&                      
& ditto                    
&                      
\\

$\mathfrak{g}_{7,0.4(\lambda)}
{ }^{\ddagger}$
&$\left( \begin{smallmatrix}
2&-4\\-4&2
\end{smallmatrix}\right)$
&                      
&                      
& $(4,4)$                    
&                      
\\
$\mathfrak{g}_{7,0.5}
{ }^{\ddagger}$
&ditto
&                      
&                      
&       ditto              
&                      
\\
$\mathfrak{g}_{7,0.6}
{ }^{\ddagger}$
&$\left( \begin{smallmatrix}
2&-3\\-3&2
\end{smallmatrix}\right)$
&                      
&                      
& $(3,3)$                    
&                      
\\
$\mathfrak{g}_{7,0.7}
{ }^{\ddagger}$
&ditto
&                      
&                      
&   ditto                  
&                      
\\

$\mathfrak{g}_{7,0.8}
{ }^{\ddagger}$
&$\left( \begin{smallmatrix}
2&-3&-3\\-3&2&-3\\-3&-3&2
\end{smallmatrix}\right)$
&                      
&                      
&                     
& $\surd$                     
\\

$\mathfrak{g}_{7,1.01(i)}
{ }^{\ddagger}$
&$\left( \begin{smallmatrix}
2&0&-4\\0&2&-4\\-1&-1&2
\end{smallmatrix}\right)$
&                      
&                      
&  $H_{123}^{(3)}$ (123)                 
&                      
\\

$\mathfrak{g}_{7,1.01(ii)}
{ }^{\ddagger}$
&ditto
&                      
&                      
&  ditto                  
&                      
\\

$\mathfrak{g}_{7,1.02}
{ }^{\ddagger}$
&$\left( \begin{smallmatrix}
2&-2\\-3&2
\end{smallmatrix}\right)
$
&                      
&                      
&$(3,2)$                    
&                      
\\

$\mathfrak{g}_{7,1.03}
{ }^{\ddagger}$
&$\left( \begin{smallmatrix}
2&-3\\-2&2
\end{smallmatrix}\right)
$
&                      
&                      
&  $(3,2)$                  
&                      
\\

$\mathfrak{g}_{7,1.1(i_\lambda)}$
$\lambda \neq 0$
&$\left( \begin{smallmatrix}
2&-5\\-3&2
\end{smallmatrix}\right)
$
&                      
&                      
& $(5,3)$                    
&                      
\\
$\mathfrak{g}_{7,1.1(i_\lambda)}$
$\lambda = 0$
&$\left( \begin{smallmatrix}
2&-5\\-2&2
\end{smallmatrix}\right)
$
&                      
&                      
& $(5,2)$                    
&                      
\\

$\mathfrak{g}_{7,1.1(ii)}$
&$\left( \begin{smallmatrix}
2&-5\\-1&2
\end{smallmatrix}\right)
$
&                      
&                     
&       $(5,1)$               
&                      
\\

$\mathfrak{g}_{7,1.1(iii)}$
&$\left( \begin{smallmatrix}
2&-4\\-3&2
\end{smallmatrix}\right)
$
&                      
&                      
& $(4,3)$                    
&                      
\\

$\mathfrak{g}_{7,1.1(iv)}$
&$\left( \begin{smallmatrix}
2&-2\\-3&2
\end{smallmatrix}\right)
$
&                      
&                      
& $(3,2)$                    
&                      
\\

$\mathfrak{g}_{7,1.1(v)}$
&$\left( \begin{smallmatrix}
2&0&-4\\0&2&-2\\-2&-1&2
\end{smallmatrix}\right)
$
&                      
&                      
&                     
& $\surd$                     
\\

$\mathfrak{g}_{7,1.1(vi)}$
&$\left( \begin{smallmatrix}
2&-3&-1\\-3&2&0\\-1&0&2
\end{smallmatrix}\right)
$
&                      
&                      
&                     
& $\surd$                     
\\

$\mathfrak{g}_{7,1.2(i_\lambda)}
{ }^{\ddagger}$
&$\left( \begin{smallmatrix}
2&-3&-2\\-3&2&-2\\-1&-1&2
\end{smallmatrix}\right)
$
&                      
&                     
&                    
& $\surd$                     
\\

$\mathfrak{g}_{7,1.2(ii)}
{ }^{\ddagger}$
&ditto
&                      
&                     
&                    
& ditto                     
\\

$\mathfrak{g}_{7,1.2(iii)}
{ }^{\ddagger}$
&ditto
&                      
&                     
&                    
& ditto                     
\\

$\mathfrak{g}_{7,1.2(iv)}
{ }^{\ddagger}$
&ditto
&                      
&                     
&                    
&  ditto                  
\\
\hline
\hline

\end{tabular}
}
\end{flushleft}
\end{table}

\setcounter{table}{1}
\begin{table}[h]%
\caption{continued}
\begin{flushleft}
{\fontsize{8}{7.5} \selectfont
\begin{tabular}{||p{1.4cm}|p{2cm}|p{1.5cm}|p{1.5cm}|p{1.5cm}|p{1.5cm}||}\hline
\hline
\textbf{algebra}&
\textbf{GCM}&
Finite&
Affine&
Indefinite Hyperbolic&
Indefinite Not Hyperbolic
\\ \hline

$\mathfrak{g}_{7,1.3(i_\lambda)}
{ }^{\ddagger}$
&$\left( \begin{smallmatrix}
2&-3&-3\\-2&2&-1\\-2&-1&2
\end{smallmatrix}\right)
$
&                      
&                     
&                    
& $\surd$                     
\\

$\mathfrak{g}_{7,1.3(ii)}
{ }^{\ddagger}$
&ditto
&                      
&                     
&                    
& ditto                     
\\

$\mathfrak{g}_{7,1.3(iii)}
{ }^{\ddagger}$
&$\left( \begin{smallmatrix}
2&-3&-3\\-2&2&0\\-2&0&2
\end{smallmatrix}\right)
$
&                      
&                     
&                    
& $\surd$                     
\\

$\mathfrak{g}_{7,1.3(iv)}
{ }^{\ddagger}$
&$\left( \begin{smallmatrix}
2&-2&-2\\-2&2&-1\\-2&-1&2
\end{smallmatrix}\right)
$
&                      
&                     
&  $H_{18}^{(3)}$ \;(40)                 
&                      
\\
$\mathfrak{g}_{7,1.3(v)}
{ }^{\ddagger}$
&$\left( \begin{smallmatrix}
2&-3&-3&-2\\-2&2&-1&-1\\-2&-1&2&-1\\-1&-1&-1&2
\end{smallmatrix}\right)
$
&                      
&                     
&                    
& $\surd$                     
\\

$\mathfrak{g}_{7,1.4}$
&$\left( \begin{smallmatrix}
2&-5\\-2&2
\end{smallmatrix}\right)
$
&                      
&                     
& $(5,2)$                  
&                      
\\

$\mathfrak{g}_{7,1.5}$
&$\left( \begin{smallmatrix}
2&-4\\-2&2
\end{smallmatrix}\right)
$
&                      
&                     
&  $(4,2)$                 
&                      
\\

$\mathfrak{g}_{7,1.6}$
&$\left( \begin{smallmatrix}
2&-5\\-2&2
\end{smallmatrix}\right)
$
&                      
&                     
&  $(5,2)$                 
&                      
\\

$\mathfrak{g}_{7,1.7}$
&$\left( \begin{smallmatrix}
2&-2&-2\\-2&2&-1\\-1&-1&2
\end{smallmatrix}\right)
$
&                      
&                     
&  $H_{8}^{(3)}$   \; (34)               
&                      
\\

$\mathfrak{g}_{7,1.8}$
&$\left( \begin{smallmatrix}
2&-3&0\\-2&2&-1\\0&-2&2
\end{smallmatrix}\right)
$
&                      
&                     
&                   
&  $\surd$                    
\\

$\mathfrak{g}_{7,1.9}$
&$\left( \begin{smallmatrix}
2&-3&-1\\-2&2&-1\\-1&-1&2
\end{smallmatrix}\right)
$
&                      
&                     
&                   
&  $\surd$                    
\\

$\mathfrak{g}_{7,1.10}$
&$\left( \begin{smallmatrix}
2&-4\\-3&2
\end{smallmatrix}\right)
$
&                      
&                     
&$(4,3)$                   
&                      
\\

$\mathfrak{g}_{7,1.11}
{ }^{\ddagger}$
&$\left( \begin{smallmatrix}
2&-4&-3\\-2&2&-1\\-1&-1&2
\end{smallmatrix}\right)
$
&                      
&                     
&                   
&  $\surd$                    
\\

$\mathfrak{g}_{7,1.12}
{ }^{\ddagger}$
&$\left( \begin{smallmatrix}
2&-4&-2\\-2&2&-1\\-1&-1&2
\end{smallmatrix}\right)
$
&                      
&                     
&                   
&  $\surd$                    
\\

$\mathfrak{g}_{7,1.13}$
&$\left( \begin{smallmatrix}
2&-4\\-2&2
\end{smallmatrix}\right)
$
&                      
&                     
& $(4,2)$                  
&                      
\\

$\mathfrak{g}_{7,1.14}$
&$\left( \begin{smallmatrix}
2&-3\\-3&2
\end{smallmatrix}\right)
$
&                      
&                     
&  $(3,3)$                 
&                     
\\

$\mathfrak{g}_{7,1.15}
{ }^{\ddagger}$
&$\left( \begin{smallmatrix}
2&-4&-3\\-2&2&-1\\-1&-1&2
\end{smallmatrix}\right)
$
&                      
&                     
&                   
&  $\surd$                    
\\

$\mathfrak{g}_{7,1.16}
{ }^{\ddagger}$
&$\left( \begin{smallmatrix}
2&-3&-2\\-2&2&-1\\-1&-1&2
\end{smallmatrix}\right)
$
&                      
&                     
&                   
&  $\surd$                    
\\

$\mathfrak{g}_{7,1.17}
{ }^{\ddagger}$
&$\left( \begin{smallmatrix}
2&-4\\-4&2
\end{smallmatrix}\right)
$
&                      
&                     
& $(4,4)$                  
&                      
\\

$\mathfrak{g}_{7,1.18}
{ }^{\ddagger}$
&$\left( \begin{smallmatrix}
2&-3&-2\\-2&2&-1\\-1&-1&2
\end{smallmatrix}\right)
$
&                      
&                     
&                   
&  $\surd$                    
\\

$\mathfrak{g}_{7,1.19}
{ }^{\ddagger}$
&$\left( \begin{smallmatrix}
2&-2&-2\\-2&2&-2\\-2&-2&2
\end{smallmatrix}\right)
$
&                      
&                     
&  $H_{71}^{(3)}$  \; (80)                
&                      
\\

$\mathfrak{g}_{7,1.20}$
&$\left( \begin{smallmatrix}
2&-1&-2\\-3&2&-1\\-1&-1&2
\end{smallmatrix}\right)
$
&                      
&                  
& $H^{(3)}_{9}$ \; (6)                   
&                      
\\

$\mathfrak{g}_{7,1.21}
{ }^{\ddagger}$
&$\left( \begin{smallmatrix}
2&-3&-2\\-3&2&-2\\-1&-1&2
\end{smallmatrix}\right)
$
&                      
&                     
&                    
& $\surd$                     

\\
\hline
\hline

\end{tabular}
}
\end{flushleft}
\end{table}

\setcounter{table}{1}
\begin{table}[h]%
\caption{continued}
\begin{flushleft}
{\fontsize{8}{7.5} \selectfont
\begin{tabular}{||p{1.4cm}|p{2cm}|p{1.5cm}|p{1.5cm}|p{1.5cm}|p{1.5cm}||}\hline
\hline
\textbf{algebra}&
\textbf{GCM}&
Finite&
Affine&
Indefinite Hyperbolic&
Indefinite Not Hyperbolic
\\ \hline

$\mathfrak{g}_{7,2.1(i_\lambda)}$
&$\left( \begin{smallmatrix}
2&-3&-1\\-1&2&-1\\-1&-1&2
\end{smallmatrix}\right)
$
&                      
&                      
& $ H^{(3)}_3$ \; (2)                   
&                      
\\

$\mathfrak{g}_{7,2.1(ii)}$
&$\left( \begin{smallmatrix}
2&-3&-1\\-1&2&0\\-1&0&2
\end{smallmatrix}\right)
$
&                      
& $D_4^{(3)}$                     
&                    
&                      
\\

$\mathfrak{g}_{7,2.1(iii)}$
&$\left( \begin{smallmatrix}
2&-3&0&0\\-1&2&-1&-1\\0&-1&2&0\\0&-1&0&2
\end{smallmatrix}\right)
$
&                      
&                      
& $H^{(4)}_{27}$ \, (150)                   
&                      
\\

$\mathfrak{g}_{7,2.1(iv)}$
&$\left( \begin{smallmatrix}
2&0&-1&-2\\0&2&-1&0\\-1&-1&2&-1\\-1&0&-1&2
\end{smallmatrix}\right)
$
&                      
&                      
&                    
&  $\surd$                    
\\

$\mathfrak{g}_{7,2.1(v)}$
&$\left( \begin{smallmatrix}
2&-2&-1\\-1&2&-1\\-1&-1&2
\end{smallmatrix}\right)
$
&                      
&                      
&  $H_1^{(3)}$ \; (1)                 
&                      
\\

$\mathfrak{g}_{7,2.2}
{ }^{\ddagger}$
&$\left( \begin{smallmatrix}
2&-1&-1\\-2&2&-1\\-2&-1&2
\end{smallmatrix}\right)
$
&                      
&                      
& $ H^{(3)}_7$ \; (4)                   
&                      
\\

$\mathfrak{g}_{7,2.3}$
&$\left( \begin{smallmatrix}
2&-5\\-1&2
\end{smallmatrix}\right)
$
&                      
&                      
&  $(5,1)$                    
&                      
\\

$\mathfrak{g}_{7,2.4}$
&$\left( \begin{smallmatrix}
2&-4\\-1&2
\end{smallmatrix}\right)
$
&                      
&  $A_2^{(2)}$                    
&                      
&                      
\\

$\mathfrak{g}_{7,2.5}$
&$\left( \begin{smallmatrix}
2&-2\\-2&2
\end{smallmatrix}\right)
$
&                      
&  $A_1^{(1)}$                    
&                      
&                      
\\

$\mathfrak{g}_{7,2.6}$
&$\left( \begin{smallmatrix}
2&-3\\-2&2
\end{smallmatrix}\right)
$
&                      
&                      
&  $(3,2)$                    
&                      
\\

$\mathfrak{g}_{7,2.7}$
&$\left( \begin{smallmatrix}
2&-4\\-2&2
\end{smallmatrix}\right)
$
&                      
&                      
&  $(4,2)$                    
&                      
\\

$\mathfrak{g}_{7,2.8}$
&$\left( \begin{smallmatrix}
2&-3\\-2&2
\end{smallmatrix}\right)
$
&                      
&                      
&  $(3,2)$                    
&                      
\\

$\mathfrak{g}_{7,2.9}$
&$\left( \begin{smallmatrix}
2&-3\\-3&2
\end{smallmatrix}\right)
$
&                      
&                      
&  $(3,3)$                    
&                      
\\

$\mathfrak{g}_{7,2.10}$
&$\left( \begin{smallmatrix}
2&-3&-1\\-1&2&0\\-1&0&2
\end{smallmatrix}\right)
$
&                      
&  $D_4^{(3)}$                    
&                      
&                      
\\

$\mathfrak{g}_{7,2.11}
{ }^{\ddagger}$
&$\left( \begin{smallmatrix}
2&-3&-2\\-2&2&-1\\-1&-1&2
\end{smallmatrix}\right)
$
&                      
&                     
&                      
&  $\surd$                    
\\

$\mathfrak{g}_{7,2.12}
{ }^{\ddagger}$
&$\left( \begin{smallmatrix}
2&-2&-2\\-2&2&0\\-2&0&2
\end{smallmatrix}\right)
$
&                      
&                      
&  $H^{(3)}_{109}$   (112)                  
&                      
\\

$\mathfrak{g}_{7,2.13}$
&$\left( \begin{smallmatrix}
2&-3&0\\-1&2&-2\\0&-1&2
\end{smallmatrix}\right)
$
&                      
&                      
&  $H^{(3)}_{100}$  \; (26)                  
&                      
\\

$\mathfrak{g}_{7,2.14}$
&$\left( \begin{smallmatrix}
2&-4&0\\-1&2&-1\\0&-2&2
\end{smallmatrix}\right)
$
&                      
&                      
&  $H^{(3)}_{107}$   (111)                  
&                      
\\

$\mathfrak{g}_{7,2.15}$
&$\left( \begin{smallmatrix}
2&-4&0\\-1&2&-1\\0&-1&2
\end{smallmatrix}\right)
$
&                      
&                      
&  $H^{(3)}_{97}$    \, (104)                
&                      
\\

$\mathfrak{g}_{7,2.16}$
&
\text{ ditto }
&                      
&                      
& \text{ ditto }                    
&                      
\\

$\mathfrak{g}_{7,2.17}$
&$\left( \begin{smallmatrix}
2&-3&0\\-2&2&-1\\0&-1&2
\end{smallmatrix}\right)
$
&                      
&                      
&                      
& $\surd$                     
\\

$\mathfrak{g}_{7,2.18}$
&
\text{ ditto }
&                      
&                      
&                      
&
\text{ ditto }
\\

$\mathfrak{g}_{7,2.19}$
&$\left( \begin{smallmatrix}
2&-3&-1\\-2&2&0\\-1&0&2
\end{smallmatrix}\right)
$
&                      
&                      
&                      
& $\surd$                     
\\

$\mathfrak{g}_{7,2.20}$
&$\left( \begin{smallmatrix}
2&-1&-3\\-1&2&0\\-2&0&2
\end{smallmatrix}\right)
$
&                      
&                      
&                      
& $\surd$                     
\\
$\mathfrak{g}_{7,2.21}$
&$\left( \begin{smallmatrix}
2&-3&-1\\-1&2&-1\\-1&-1&2
\end{smallmatrix}\right)
$
&                      
&                      
&  $H^{(3)}_{3}$ \; (2)                   
&                      
\\

$\mathfrak{g}_{7,2.22}$
&$\left( \begin{smallmatrix}
2&0&-3\\0&2&-1\\-2&-1&2
\end{smallmatrix}\right)
$
&                      
&                      
&                      
& $\surd$                     
\\

$\mathfrak{g}_{7,2.23}$
&$\left( \begin{smallmatrix}
2&0&0&-2\\0&2&-2&0\\
0&-1&2&-1\\-1&0&-1&2
\end{smallmatrix}\right)
$
&                      
&  $D_4^{(2)}$                    
&                      
&                      

\\
\hline
\hline

\end{tabular}
}
\end{flushleft}
\end{table}
\setcounter{table}{1}
\begin{table}[h]%
\begin{flushleft}
\caption{ continued}
{\fontsize{8}{7.5} \selectfont
\begin{tabular}{||p{1.4cm}|p{2cm}|p{1.5cm}|p{1.5cm}|p{1.5cm}|p{1.5cm}||}\hline
\hline
\textbf{algebra}&
\textbf{GCM}&
Finite&
Affine&
Indefinite Hyperbolic&
Indefinite Not Hyperbolic
\\ \hline

$\mathfrak{g}_{7,2.24}$
&$\left( \begin{smallmatrix}
2&-3&0\\-1&2&-1\\0&-1&2
\end{smallmatrix}\right)
$
&                      
&  $G_2^{(1)}$                    
&                      
&                      
\\

$\mathfrak{g}_{7,2.25}
{ }^{\ddagger}$
&$\left( \begin{smallmatrix}
2&-3&0&-2\\-1&2&-1&0\\0&-1&2&-1\\-1&0&-1&2
\end{smallmatrix}\right)
$
&                      
&                      
&                      
& $\surd$                     
\\

$\mathfrak{g}_{7,2.26}
{ }^{\ddagger}$
&$\left( \begin{smallmatrix}
2&-2&-1\\-2&2&-1\\-1&-1&2
\end{smallmatrix}\right)
$
&                      
&                      
&   $H_2^{(3)}$ \; (32)            
&                      
\\

$\mathfrak{g}_{7,2.27}
{ }^{\ddagger}$
&$\left( \begin{smallmatrix}
2&-2&-1&-1\\-2&2&0&-1\\-1&0&2&0\\-1&-1&0&2
\end{smallmatrix}\right)
$
&                      
&                      
&                      
& $\surd$                     
\\
$\mathfrak{g}_{7,2.28}$
&$\left( \begin{smallmatrix}
2&-1&-2&0\\-2&2&0&0\\-1&0&2&-1\\0&0&-1&2
\end{smallmatrix}\right)
$
&                      
&                      
&  $H^{(4)}_{40}$  \, (164)                  
&                      
\\

$\mathfrak{g}_{7,2.29}$
&$\left( \begin{smallmatrix}
2&-2&0&0\\-2&2&-1&0\\0&-1&2&-1\\0&0&-1&2
\end{smallmatrix}\right)
$
&                      
&                      
&                      
& $\surd$                     
\\

$\mathfrak{g}_{7,2.30}$
&$\left( \begin{smallmatrix}
2&-3&0&0\\-2&2&0&0\\0&0&2&-1\\0&0&-1&2
\end{smallmatrix}\right)
$
&                      
&                      
& $(3,2) \times A_2$                     
&                     
\\
$\mathfrak{g}_{7,2.31}$
&$\left( \begin{smallmatrix}
2&-3&0\\-1&2&-2\\0&-1&2
\end{smallmatrix}\right)
$
&                      
&                      
&  $H^{(3)}_{100}$  \; (26)                  
&                      
\\

$\mathfrak{g}_{7,2.32}$
&$\left( \begin{smallmatrix}
2&-3&-1\\-1&2&0\\-2&0&2
\end{smallmatrix}\right)
$
&                      
&                      
&  $H^{(3)}_{106}$   \; (25)                 
&                      
\\

$\mathfrak{g}_{7,2.33}$
&$\left( \begin{smallmatrix}
2&-3&0\\-1&2&-1\\0&-2&2
\end{smallmatrix}\right)
$
&                      
&                      
&  $H^{(3)}_{105}$  \; (28)                  
&                      
\\

$\mathfrak{g}_{7,2.34}$
&$\left( \begin{smallmatrix}
2&-2&-1\\-2&2&0\\-2&0&2
\end{smallmatrix}\right)
$
&                      
&                      
&  $H^{(3)}_{104}$      (107)               
&                      
\\

$\mathfrak{g}_{7,2.35}$
&$\left( \begin{smallmatrix}
2&-1&-2\\-2&2&0\\-1&0&2
\end{smallmatrix}\right)
$
&                      
& $A_4^{(2)}$                     
&                     
&                      
\\

$\mathfrak{g}_{7,2.36}$
&$\left( \begin{smallmatrix}
2&0&-1&-1\\0&2&-1&-1\\-1&-2&2&0\\-1&-1&0&2
\end{smallmatrix}\right)
$
&                      
&                      
&  $H^{(4)}_8$ \, (131)                    
&                     
\\

$\mathfrak{g}_{7,2.37}
{ }^{\ddagger}$
&$\left( \begin{smallmatrix}
2&-2&-1\\-2&2&-1\\-1&-1&2
\end{smallmatrix}\right)
$
&                      
&                      
&  $H^{(3)}_{2}$  \, (32)                  
&                      
\\

$\mathfrak{g}_{7,2.38}$
&$\left( \begin{smallmatrix}
2&-2&0&-1\\-2&2&-1&0\\0&-1&2&0\\-1&0&0&2
\end{smallmatrix}\right)
$
&                      
&                      
&                      
&  $\surd$                    
\\

$\mathfrak{g}_{7,2.39}$
&$\left( \begin{smallmatrix}
2&-2&-2\\-1&2&-1\\-1&-1&2
\end{smallmatrix}\right)
$
&                      
&                      
&  $H^{(3)}_{5}$ \; (3)                   
&                      
\\

$\mathfrak{g}_{7,2.40}$
&$\left( \begin{smallmatrix}
2&-2&-1\\-2&2&-1\\-1&-1&2
\end{smallmatrix}\right)
$
&                      
&                      
&  $H^{(3)}_{2}$  \; (32)                  
&                      
\\

$\mathfrak{g}_{7,2.41}$
&$\left( \begin{smallmatrix}
2&-2&-2\\-2&2&0\\-1&0&2
\end{smallmatrix}\right)
$
&                      
&                      
&  $H^{(3)}_{99}$     \, (106)               
&                      
\\

$\mathfrak{g}_{7,2.42}$
&$\left( \begin{smallmatrix}
2&-1&-2\\-2&2&-1\\-1&-1&2
\end{smallmatrix}\right)
$
&                      
&                      
&  $H^{(3)}_{6}$     \; (5)               
&                      
\\

$\mathfrak{g}_{7,2.43}$
&$\left( \begin{smallmatrix}
2&-2&-2\\-1&2&0\\-2&0&2
\end{smallmatrix}\right)
$
&                      
&                      
&  $H^{(3)}_{99}$  \, (106)                  
&                      
\\

$\mathfrak{g}_{7,2.44}$
&$\left( \begin{smallmatrix}
2&-1&-2\\-2&2&-1\\-1&-1&2
\end{smallmatrix}\right)
$
&                      
&                      
&  $H^{(3)}_{6}$  \; (5)                  
&                      
\\

$\mathfrak{g}_{7,2.45}$
&$\left( \begin{smallmatrix}
2&-2&0&-1\\-1&2&-1&-1\\0&-1&2&0\\-1&-1&0&2
\end{smallmatrix}\right)
$
&                      
&                      
&                     
&  $\surd$                    
\\
 \hline
 \hline
\end{tabular}
}
\end{flushleft}
\end{table}
\setcounter{table}{1}
\begin{table}[h]%
\begin{flushleft}
\caption{continued}
{\fontsize{8}{7.5} \selectfont
\begin{tabular}{||p{1.4cm}|p{2.5cm}|p{1.5cm}|p{1.5cm}|p{1.5cm}|p{1.5cm}||}\hline
\hline
\textbf{algebra}&
\textbf{GCM}&
Finite&
Affine&
Indefinite Hyperbolic&
Indefinite Not Hyperbolic
\\ \hline

$\mathfrak{g}_{7,3.1(i_\lambda)}$
&$\left( \begin{smallmatrix}
2&-1&-1\\-1&2&-1\\-1&-1&2
\end{smallmatrix}\right) $
&                      
&  $A_2^{(1)}$                    
&                     
&                     
\\

$\mathfrak{g}_{7,3.1(iii)}$
&$\left( \begin{smallmatrix}
2&-1&-1&-1\\-1&2&0&0\\-1&0&2&0\\-1&0&0&2
\end{smallmatrix}\right) $
&  $D_4$                    
&                      
&                     
&                     
\\

$\mathfrak{g}_{7,3.2}$
&$\left( \begin{smallmatrix}
2&-3&-1\\-1&2&0\\-1&0&2
\end{smallmatrix}\right) $
&                      
&  $D_4^{(3)}$                    
&                     
&                     
\\

$\mathfrak{g}_{7,3.3}$
&$\left( \begin{smallmatrix}
2&-3&0\\-1&2&-1\\0&-1&2
\end{smallmatrix}\right) $
&                      
&  $G_2^{(1)}$                    
&                     
&                     
\\

$\mathfrak{g}_{7,3.4}$
&$\left( \begin{smallmatrix}
2&-1&-1\\-2&2&0\\-2&0&2
\end{smallmatrix}\right) $
&                      
&  $D_3^{(2)}$                    
&                     
&                     
\\

$\mathfrak{g}_{7,3.5}$
&$\left( \begin{smallmatrix}
2&-1&-1\\-2&2&0\\-1&0&2
\end{smallmatrix}\right) $
&  $B_3$                    
&                      
&                     
&                     
\\

$\mathfrak{g}_{7,3.6}$
&$\left( \begin{smallmatrix}
2&-1&-2\\-1&2&-1\\-1&-1&2
\end{smallmatrix}\right) $
&                      
&                      
&  $H^{(3)}_1$ \; (1)            
&                     
\\

$\mathfrak{g}_{7,3.7}$
&$\left( \begin{smallmatrix}
2&-2&0&0\\-1&2&0&-1\\0&0&2&-1\\0&-1&-1&2
\end{smallmatrix}\right) $
&  $B_4$                    
&                      
&                     
&                     
\\

$\mathfrak{g}_{7,3.8}$
&$\left( \begin{smallmatrix}
2&-2&-1&0\\-1&2&0&-1\\-1&0&2&0\\0&-1&0&2
\end{smallmatrix}\right) $
&  $F_4$                    
&                      
&                     
&                     
\\

$\mathfrak{g}_{7,3.9}$
&$\left( \begin{smallmatrix}
2&-2&0&0\\-1&2&-1&-1\\0&-1&2&0\\0&-1&0&2
\end{smallmatrix}\right) $
&                      
&  $B_3^{(1)}$                    
&                     
&                     
\\

$\mathfrak{g}_{7,3.10}$
&$\left( \begin{smallmatrix}
2&-1&-1&0\\-1&2&0&-2\\-1&0&2&0\\0&-1&0&2
\end{smallmatrix}\right) $
&  $C_4$                    
&                      
&                     
&                     
\\

$\mathfrak{g}_{7,3.11}$
&$\left( \begin{smallmatrix}
2&-2&-1&0\\-1&2&0&-1\\-1&0&2&0\\0&-1&0&2
\end{smallmatrix}\right) $
&  $F_4$                    
&                      
&                     
&                     
\\

$\mathfrak{g}_{7,3.12}$
&$\left( \begin{smallmatrix}
2&-1&-1&0\\-1&2&0&-1\\-1&0&2&-1\\0&-1&-1&2
\end{smallmatrix}\right) $
&                      
&  $A^{(1)}_3$                    
&                     
&                     
\\

$\mathfrak{g}_{7,3.13}$
&$\left( \begin{smallmatrix}
2&-2&0&0\\-2&2&0&0\\0&0&2&-1\\0&0&-1&2
\end{smallmatrix}\right) $
&                      
&  $A^{(1)}_1 \times A_2$                    
&                     
&                     
\\

$\mathfrak{g}_{7,3.14}$
&$\left( \begin{smallmatrix}
2&-1&-2&0\\-1&2&0&-1\\-1&0&2&0\\0&-1&0&2
\end{smallmatrix}\right) $
&  $C_4$                    
&                      
&                     
&                     
\\

$\mathfrak{g}_{7,3.15}$
&$\left( \begin{smallmatrix}
2&-1&-1&0\\-2&2&0&0\\-1&0&2&-1\\0&0&-1&2
\end{smallmatrix}\right) $
&  $B_4$                    
&                      
&                     
&                     
\\

$\mathfrak{g}_{7,3.16}$
&$\left( \begin{smallmatrix}
2&-2&0&0\\-1&2&0&0\\0&0&2&-2\\0&0&-1&2
\end{smallmatrix}\right) $
&  $B_2 \times B_2$                    
&                      
&                     
&                     
\\

$\mathfrak{g}_{7,3.17}$
&$\left( \begin{smallmatrix}
2&-3&0&0\\-1&2&0&0\\0&0&2&-1\\0&0&-1&2
\end{smallmatrix}\right) $
&  $G_2 \times A_2$                    
&                      
&                     
&                     
\\

$\mathfrak{g}_{7,3.18}$
&$\left( \begin{smallmatrix}
2&-2&0&0&0\\-1&2&0&0&-1\\0&0&2&-1&0\\0&0&-1&2&0\\0&-1&0&0&2
\end{smallmatrix}\right) $
&  $B_3 \times A_2$                    
&                      
&                     
&                     
\\

$\mathfrak{g}_{7,3.19}$
&$\left( \begin{smallmatrix}
2&-1&-1&0&0\\-1&2&0&0&0\\-1&0&2&-1&0\\0&0&-1&2&-1\\0&0&0&-1&2
\end{smallmatrix}\right) $
&  $A_5$                    
&                      
&                     
&                     
\\
 \hline
 \hline
\end{tabular}
}
\end{flushleft}
\end{table}
\setcounter{table}{1}
\begin{table}[h]%
\begin{flushleft}
\caption{continued}
{\fontsize{8}{7.5} \selectfont
\begin{tabular}{||p{1.4cm}|p{2.8cm}|p{1.8cm}|p{1.5cm}|p{1.5cm}|p{1.5cm}||}\hline
\hline
\textbf{algebra}&
\textbf{GCM}&
Finite&
Affine&
Indefinite Hyperbolic&
Indefinite Not Hyperbolic
\\ \hline

$\mathfrak{g}_{7,3.20}$
&$\left( \begin{smallmatrix}
2&-2&-2\\-1&2&0\\-1&0&2
\end{smallmatrix}\right) $
&                      
&  $C_2^{(1)}$                    
&                     
&                     
\\

$\mathfrak{g}_{7,3.21}$
&$\left( \begin{smallmatrix}
2&-2&-1\\-1&2&0\\-2&0&2
\end{smallmatrix}\right) $
&                      
&  $A_4^{(2)}$                    
&                     
&                     
\\

$\mathfrak{g}_{7,3.22}$
&$\left( \begin{smallmatrix}
2&-1&-2\\-1&2&0\\-1&0&2
\end{smallmatrix}\right) $
&  $C_3$                    
&                      
&                     
&                     
\\

$\mathfrak{g}_{7,3.23}$
&$\left( \begin{smallmatrix}
2&-2&-1\\-2&2&0\\-1&0&2
\end{smallmatrix}\right) $
&                      
&                      
&   $H^{(3)}_{96}$ \, (103)                 
&                     
\\

$\mathfrak{g}_{7,3.24}$
&$\left( \begin{smallmatrix}
2&-1&0&0\\-1&2&-1&-1\\0&-1&2&-1\\0&-1&-1&2
\end{smallmatrix}\right) $
&                      
&                      
&   $H^{(4)}_{3}$ \, (126)                 
&                     
\\

$\mathfrak{g}_{7,4.1}$
&$\left( \begin{smallmatrix}
2&-1&-1&0\\-1&2&0&0\\-1&0&2&-1\\0&0&-1&2
\end{smallmatrix}\right) $
&   $A_4$                   
&                      
&                     
&                     
\\

$\mathfrak{g}_{7,4.2}$
&$\left( \begin{smallmatrix}
2&-1&-1&-1\\-1&2&0&0\\-1&0&2&0\\-1&0&0&2
\end{smallmatrix}\right) $
&   $D_4$                   
&                      
&                     
&                     
\\

$\mathfrak{g}_{7,4.3}$
&$\left( \begin{smallmatrix}
2&-1&0&0&0\\-1&2&0&0&0\\0&0&2&0&-1\\0&0&0&2&-1\\0&0&-1&-1&2
\end{smallmatrix}\right) $
&   $A_2 \times A_3$                   
&                      
&                     
&                     
\\

$\mathfrak{g}_{7,4.4}$
&$\left( \begin{smallmatrix}
2&0&0&-1&0&0\\
0&2&0&0&-1&0\\
0&0&2&0&0&-1\\
-1&0&0&2&0&0\\
0&-1&0&0&2&0\\
0&0&-1&0&0&2
\end{smallmatrix}\right) $
&   $A_2 \times A_2 \times A_2$                   
&                      
&                     
&                     
\\
 \hline
 \hline
\end{tabular}
}
\end{flushleft}
\end{table}

\end{document}